\def\struckint{\mathop{%
\def\mathpalette##1##2{\mathchoice{##1\displaystyle##2}%
 {##1\textstyle##2}{##1\scriptstyle##2}{##1\scriptscriptstyle##2}}%
\mathpalette
{\vbox\bgroup\baselineskip0pt\lineskiplimit-1000pt\lineskip-1000pt
\halign\bgroup\hfill$}
{##$\hfill\cr{\intop}\cr\diagup\cr\egroup\egroup}%
}\limits}
\def\sup{\mathop{\rm sup}\nolimits}
\def\symmetricn{\mathop{\mathcal{S}_{n}}\nolimits}
\def\symmetricntilde{\mathop{\overline{\mathcal{S}}_{n}}\nolimits}
\def\symmetricm{\mathop{\mathcal{S}_{m}}\nolimits}
\def\symmetricj{\mathop{\mathcal{S}_{j}}\nolimits}
\def\inv{\mathop{\text{inv}}\nolimits}
\def\part{\mathop{\mbox{Part}}\nolimits}
\def\ell{\mathop{[l]}\nolimits}
\def\red{\mathop{\text{red}}\nolimits}
\def\e{\mathbb{E}\,}
\def\R{\mathbb{R}\,}
\def\X{\mathbb{X}}
\def\B{\mathbb{B}}
\def\LOX{\mathcal{L}\left(\overline{\X}\right)}
\def\LOB{\mathcal{L}\left(\overline{\B}\right)}
\newcommand{\ignore}[1]{ }
\newtheorem{thm}{Theorem}[section]
\newtheorem{lemma}[thm]{Lemma}
\newtheorem{prop}[thm]{Proposition}
\newtheorem{defn}[thm]{Definition}
\newtheorem{rmk}[thm]{Remark}
\title{Pattern avoidance for random permutations}
\author{Harry Crane\affiliationmark{1}\thanks{partially supported by NSF CAREER Award (DMS-1554092)}  
   \and  Stephen DeSalvo\affiliationmark{2}}
\affiliation{
Rutgers University, 110 Frelinghuysen Road, Piscataway, NJ, 08854 \\
University of California Los Angeles, 520 Portola Plaza, Los Angeles, CA, 90095}
\date{\today}
\keywords{pattern avoidance; Mallows distribution; random permutation; Poisson approximation}
\begin{document}
\maketitle
\begin{abstract}
Using techniques from Poisson approximation, we prove explicit error bounds on the number of permutations that avoid any pattern. 
Most generally, we bound the total variation distance between the joint distribution of pattern occurrences and a corresponding joint distribution of independent Bernoulli random variables, which as a corollary yields a Poisson approximation for the distribution of the number of occurrences of any pattern.
We also investigate occurrences of consecutive patterns in random Mallows permutations, of which uniform random permutations are a special case.
These bounds allow us to estimate the probability that a pattern occurs any number of times and, in particular, the probability that a random permutation avoids a given pattern.
\end{abstract}

\setcounter{tocdepth}{1}
\tableofcontents

\section{Introduction}\label{section:intro}

A {\em permutation} of $[n]=\{1,\ldots,n\}$ is a bijection $\sigma:[n]\rightarrow[n]$, $i\mapsto\sigma(i)=\sigma_i$, written $\sigma=\sigma_1\cdots\sigma_n$.
For each $n=1,2,\ldots$, we write $\symmetricn$ to denote the set of permutations of $[n]$.
Given permutations $\sigma=\sigma_1\cdots\sigma_n$ and $\tau=\tau_1\cdots\tau_m$, we say that {\em $\sigma$ avoids $\tau$} if there does not exist a subsequence $1\leq i_1<\cdots <i_m\leq n$ such that $\sigma_{i_1}\cdots\sigma_{i_m}$ is order-isomorphic to $\tau$, and we say that {\em $\sigma$ avoids $\tau$ consecutively} if there is no $j=1,\ldots, n-m+1$ such that $\sigma_j\sigma_{j+1}\cdots\sigma_{j+m-1}$ is order-isomorphic to $\tau$.
Here we study pattern avoidance probabilities for random permutations from the Mallows distribution, which is of particular interest in the fields of statistics and probability but some special cases are insightful for questions in enumerative and extremal combinatorics.

To demonstrate our approach, let $n$ be any integer larger than or equal to 6.  It was shown in~\cite{pratt1973computing} that each fixed pattern $\tau$ of length $n-1$ is avoided by exactly $n! - (\, (n-1)^2+1\, )$ permutations in $\symmetricn$. 
In~\cite{ray2003posets}, it was shown that for each fixed pattern $\tau$ of length $N \equiv n-2$, there exists $0 \leq j \leq N-1$ such that exactly 
\[ n! - \frac{N^4 +2N^3+N^2+4N+4-2j}{2} \]
permutations in $\symmetricn$ avoid $\tau$. 
It is left as an open problem in~\cite{vatter376problems} to perform the same analysis for patterns of length $n-3$. 
From this, it is natural to ruminate on whether for each pattern $\tau$ of length $n-m$, one has approximately 
\begin{equation}\label{approx} n! - \frac{n!}{(n-m)!} \binom{n}{n-m}\end{equation}
permutations in $\symmetricn$ which avoid it, and how large $m$ can be taken. 
In this paper, we quantify the quality of approximation of~\eqref{approx} for counting the number of permutations in $\symmetricn$ which avoid any given pattern $\tau$ of length $n-m$, $1 \leq m \leq n$, via Poisson approximation. 
This approach yields general results which specialize to the counting problem just described, as well as the analogous problem for consecutive pattern avoidance. 

For classical pattern avoidance, we are able to show a good approximation if for any $\epsilon > 0$ we have $m \geq (e\, e^{1/e} + \epsilon)\sqrt{n}$, but we suspect the $e\, e^{1/e}$ constant can be improved with a natural lower bound of $e$ demonstrated by Lemma~\ref{lemma:d1bound}. 
For consecutive pattern avoidance, we obtain stronger results, namely, we may take any $m \geq \lceil \Gamma^{-1}(n)-1\rceil$.  (In the above expression, $\Gamma^{-1}(n)$ is the inverse of the gamma function, i.e., $\Gamma(t)=\int_0^{\infty}x^t e^{-x}dx$, which gives the usual generalization of the factorial function to all positive real numbers.)
The above expression is asymptotically best possible since, in the limit, taking $m$ equal to the right-hand side yields a Poisson approximation with nonzero rate parameter for the number of occurrences of a consecutive pattern.  
The range of values for $m$ follows by a detailed analysis of the bounds contained in Theorem~\ref{theorem:main} and Theorem~\ref{consecutive patterns}, respectively. 

The Mallows permutations we study here is a general class of random permutations whose distribution is weighted by the number of inversions.
An {\em inversion} in $\sigma=\sigma_1\cdots\sigma_n$ is a pair $(i,j)$, $i<j$, such that $\sigma_i>\sigma_j$.
For example, $\sigma=34125$ has four inversions, $(1,3), (1,4), (2,3), (2,4)$.
We write $\inv(\sigma)$ to denote the set of inversions of $\sigma$.
With $\Sigma_n$ denoting a random permutation of $[n]$, the {\em Mallows distribution with parameter $q\in(0,\infty)$} on $\symmetricn$ assigns probability
\begin{equation}\label{eq:Mallows}
P\{\Sigma_n=\sigma\}=P_n^q(\sigma)=q^{|\inv(\sigma)|}/I_n(q),\quad\sigma\in\symmetricn,\end{equation}
where $I_n(q)=\prod_{j=1}^n\sum_{i=0}^{j-1}q^i$ is the {\em inversion polynomial} and $|\inv(\sigma)|$ is the number of inversions in $\sigma$.
Note that $q=1$ corresponds to the uniform distribution on $\symmetricn$, i.e., $P\{\Sigma_n=\sigma\}=1/n!$ for all $\sigma\in\symmetricn$,  and is the critical point at which the Mallows family switches from penalizing inversions, $q<1$, to favoring them, $q>1$.

The Mallows distribution \cite{Mallows1957} was introduced as a one-parameter model for rankings that occur in statistical analysis.
More recently, Mallows permutations have been studied in the context of the longest increasing subsequence problem \cite{Bhatnagar2014} and quasi-exchangeable random sequences \cite{GnedinOlshanski2009,GnedinOlshanski2010}. 
For general values of $q>0$, we consider the problem of consecutive pattern avoidance for random Mallows permutations, with Theorem~\ref{main:mallows:theorem} establishing explicit error bounds on the entire distribution of the number of occurrences of patterns in a random permutation.
Our main theorems, therefore, complement prior work by Elizalde \& Noy \cite{elizaldenoy}, Perarnau \cite{perarnau}, and the more recent work by the current authors \& Elizalde ~\cite{crane2016probability} on consecutive pattern avoidance, as well as Nakamura \cite{Nakamura2014}, who used functional equations to enumerate sets with a prescribed number of occurrences of a given pattern. 

Our approach also differs from previous work  in a few key respects. 
While most prior work seeks either exact or asymptotic enumeration of the sets that avoid a given pattern or collection of patterns, we use the Chen--Stein Poisson approximation method \cite{chen1975poisson}, in particular \cite{ArratiaGoldstein}, to bound the total variation distance between the collection of all \emph{dependent} indicator random variables indicating pattern occurrence for a prescribed set of indices, and a joint distribution of \emph{independent} Bernoulli random variables with the same marginal distributions. 
These bounds allow us to approximate any measurable function of the occurrences, e.g., the number of patterns, locations of patterns, etc., via the corresponding independent random variables. 
We also reach a natural limit of the usefulness of these approximations corresponding to the strength of interactions between occurrences of patterns. 

The next section gives a brief historical context of restricted permutations. 
Section~\ref{section:pattern} presents the main results, of which there are two kinds: the first type is a preasymptotic bound given in terms of quantities which are complicated to compute but as accurate as the method allows; the second type is a more detailed analysis of the bounds in a form that is better suited to applications. 
Section~\ref{section:poisson} presents the Chen--Stein Poisson approximation approach that we utilize throughout the paper. 
In Section~\ref{section:Mallows} we apply this method to Mallows permutations. 
Section~\ref{section:cases} contains explicit numerical examples. 
Finally, the main technical results are proved in Section~\ref{section:proofs}.

\section{Motivation}\label{section_motivation}
Restricted permutations fall into two broad classes.  
The first, more tractable type is of the form $\sigma(a) \neq b$ for $a, b\in [n]$, whose study dates to the classical {\em probl\`emes des rencontres} in the early 1700s \cite{Montmort1708}; see also \cite[Chapter 4]{barbour1992poisson}.
A special case counts the number $D_n$ of {\em derangements} of $[n]$, i.e., permutations of $[n]$ without fixed points, for which we have the asymptotic expression
\begin{equation}\label{eq:derangement}D_n=n!\sum_{i=0}^n\frac{(-1)^i}{i!}\sim n!/e\quad\text{as }n\rightarrow\infty.\end{equation}
Equation \eqref{eq:derangement} can be stated in probabilistic terms by letting $\Sigma_n$ be a uniform random permutation of $[n]$, i.e., $P\{\Sigma_n=\sigma\}=1/n!$ for each $\sigma\in\symmetricn$, for which we compute
\begin{equation}\label{eq:derangement-prob}P\{\Sigma_n\text{ is a derangement}\}=D_n/n!\sim1/e\quad\text{as }n\rightarrow\infty.\end{equation}
See~\cite{ArratiaTavare,SheppLloyd} for more thorough treatments involving the cycle structure of random permutations.  

We can also derive the expression in \eqref{eq:derangement} by Poisson approximation. 
With $W$ denoting the number of fixed points in a random permutation of $[n]$, we demonstrate in Section~\ref{section:fixed}, see also \cite[Chapter 4]{barbour1992poisson}, that the distribution of $W$ converges in total variation distance to the distribution of an independent Poisson random variable with expected value $1$.  
In addition to the asymptotic value for the probability that a random permutation has no fixed points, this approach bounds the absolute error of probabilities that involve {any measurable function} of the number of fixed points in a random permutation.

The second type of restriction is {\em pattern avoidance}, which attracts increasing attention in the modern probability \cite{Bhatnagar2014,HoffmanRizzolo2016I,HoffmanRizzolo2016II} and modern combinatorics literature \cite{BonaBook}.
Any sequence of distinct positive integers $w=w_1\cdots w_k$ determines a permutation of $[k]$ by {\em reduction}: with $\{w_{(1)},\ldots,w_{(k)}\}_{<}$ denoting the set of elements listed in increasing order, we define the map $w_{(i)}\mapsto i$, under which $w$ maps to a permutation $\red(w)$ of $[k]$, called the {\em reduction} of $w$.
For example, $w=826315$ reduces to $\red(w)=625314$.
We call any fixed $\tau\in\symmetricm$ a {\em pattern} and say that $\sigma\in\symmetricn$ {\em contains} $\tau$ if there exists a subsequence $1\leq i_1<\cdots<i_m\leq m$ such that $\red(\sigma_{i_1}\cdots\sigma_{i_m})=\tau$.
We say $\sigma\in\symmetricn$ {\em avoids} $\tau$ if it does not contain it.
We say that $\sigma$ {\em contains $\tau$ consecutively} if there exists an index $j\in[n-m+1]$ such that $\red(\sigma_{j}\sigma_{j+1}\cdots\sigma_{j+m-1})=\tau$; otherwise, we say $\sigma$ {\em avoids $\tau$ consecutively}.
For any pattern $\tau$, we define 
\begin{align}
\symmetricn(\tau)&:=\{\sigma\in\symmetricn:\,\text{$\sigma$ avoids $\tau$}\}\quad\text{and}\notag\\
\label{eq:consecutive-set}
\symmetricntilde(\tau)&:=\{\sigma\in\symmetricn:\,\text{$\sigma$ avoids $\tau$ consecutively}\},
\end{align}
which we extend to any subset $A\subset\bigcup_{n\geq1}\symmetricn$ by
\begin{align}
\label{eq:avoiding-set}\symmetricn(A)&:=\{\sigma\in\symmetricn:\,\sigma\text{ avoids all }\tau\in A\}\quad\text{and}\\
\symmetricntilde(A)&:=\{\sigma\in\symmetricn:\,\sigma\text{ avoids all $\tau\in A$ consecutively}\}\notag.
\end{align}
For the most part, we are interested in sets $\symmetricn(\tau)$ containing all permutations that avoid a given pattern $\tau$, though our approach extends in a straightforward manner for more general sets $A$.

Much effort has been devoted to exact enumeration of $\symmetricn(A)$ for certain choices of $A$, see, e.g., \cite{TwoPatterns4,TwoPatterns1,TwoPatterns2,TwoPatterns3}.
But enumerating $\symmetricn(\tau)$ is notoriously difficult for patterns of fixed length larger than 3.  
Knuth \cite{KnuthI} initiated interest in pattern avoidance in the study of algorithms by identifying the $231$-avoiding permutations as exactly those that can be sorted by a single run through a stack; see Bona \cite[Chapter 8]{BonaBook} for further discussion.
In fact, it is now well known that the avoidance sets $\symmetricn(\tau)$ for every length-$3$ pattern $\tau$ are enumerated by the Catalan numbers \cite[A000108]{OEIS}:
\[|\symmetricn(\tau)|={2n\choose n}/(n+1),\quad \tau\in\{123,132,213,231,312,321\}.\]

Just as in the derangement problem above, enumeration of $\symmetricn(A)$ has an elementary probabilistic interpretation that motivates much of our paper.
With $\Sigma_n$ denoting a uniform random permutation of $[n]$ and $A$ a set of permutations, the probability that $\Sigma_n$ avoids $A$ is
\[P\{\Sigma_n\text{ avoids every }\tau\in A\}=|\symmetricn(A)|/n!.\]
The Stanley-Wilf conjecture as proved in~\cite{MarcusTardos2004} states that $|\symmetricn(\tau)|$ grows exponentially with $n$ for every {fixed} $\tau$.
For example, the Catalan numbers are known to grow asymptotically like $4^n / \sqrt{\pi\, n^3}$, yielding the asymptotic avoidance probability
\[P\{\Sigma_n\text{ avoids }231\}=\binom{2n}{n}/(n+1)!\sim \frac{1}{\pi\, n^2 \sqrt{2}}\left(\frac{4\, e}{n}\right)^n \quad\text{as }n\rightarrow\infty.\]
Such calculations quickly become intractable as $n$ grows large.
For example, the sets of $1324$-avoiding permutations have only been enumerated up to $n=31$ \cite{JohanssonNakamura2014}.
Even precise asymptotics for $|\symmetricn(1324)|$ have not yet been established \cite{Bona2014b,Bona2014a,ClaessonJelinek2012}.

\section{Main Results}\label{section:pattern}
To fix notation throughout the text, we write $\sigma=\sigma_1\cdots\sigma_n$ to denote a generic permutation.
For any subset $A\subseteq[n]$, we write $\sigma_{|A}$ to denote the {\em restriction} of $\sigma$ to a permutation of $A$ obtained by removing those elements among $\sigma_1,\ldots,\sigma_n$ that are not in $A$.
For example, with $\sigma=867531924$ and $A=\{1,3,5,7,9\}$, we have $\sigma_{|A}=75319$.
We write $\Sigma_n$ to denote a random permutation of $[n]$.

\subsection{Definitions}
\label{section:definitions}
Throughout the paper, we write $\mathcal{L}(X)$ to denote the {\em distribution}, or {\em law}, of a random variable $X$ and $\mathcal{L}(Y\mid X)$ to denote the {\em conditional distribution of $Y$ given $X$}.
For random variables $X$ and $Y$, we write $d_{TV}(\mathcal{L}(X),\mathcal{L}(Y))$ to denote the {\em total variation distance} between the distributions of $X$ and $Y$, which is defined as 
\[\displaystyle  d_{TV}(\mathcal{L}(X), \mathcal{L}(Y)) = \sup_{A \subseteq \mathbb{R}} \left| P(X \in A) - P(Y \in A)\right|, \]
where the $\sup$ is taken over Borel measurable subsets of $\mathbb{R}$. 
In the special case of non-negative integer-valued random variables, the total variation distance can be computed as
\[d_{TV}(\mathcal{L}(X),\mathcal{L}(Y))=\frac{1}{2}\sum_{n=0}^{\infty}|P(X=n)-P(Y=n)|.\]

Define the set of all unordered, distinct $m$-tuples of elements from $[n]$ by
\[ J_m := \{\{i_1, \ldots, i_m\} : 1\leq i_1<\cdots<i_m\leq n\}, \qquad m\in [n]. \]
For each $\alpha \in J_m$, let $D_\alpha$ be the set of all $m$-element subsets of $[n]$ that overlap with $\alpha$ in at least one element, i.e., $D_{\alpha}:=\{\beta\in J_m:\,\beta\cap\alpha\neq\emptyset\}.$
  For example, if $\alpha = \{1, 5, 8\}$, then $D_{\alpha}^c = \{ \{j_1, j_2, j_3\} : j_i \notin\{1,5,8\}, i=1,2,3\}.$  

With $\Sigma_n$ denoting a uniform random permutation of $[n]$, $\alpha \in  J_m$, and $\tau$ a fixed pattern of length $m$, we define $X_{\alpha}=X_{i_1, \ldots, i_m}$ as the indicator random variable for the event that the reduction of $\Sigma_n$ at positions $i_1, \ldots, i_m$ gives the pattern $\tau$, i.e.,
\begin{equation}\label{eq:indicator}
X_{i_1,\ldots,i_m}:=\mathbb{I}( { \red(\Sigma_n(i_1)\cdots\Sigma_n(i_m))=\tau } ).\end{equation} 
Let $\mathbb{X} \equiv \mathbb{X}_m := (X_\alpha)_{\alpha\in J_m}$ be the collection of all such indicators and 
let $\mathbb{B} \equiv \mathbb{B}_m = (B_\alpha)_{\alpha\in  J_m}$ denote a collection of independent Bernoulli random variables whose marginal distributions satisfy $\e B_\alpha = \e X_\alpha$ for each $\alpha \in  J_m$.  
The random variable
\begin{equation}\label{eq:W}
 W = \sum_{1 \leq i_1 < \ldots < i_m \leq n} X_{i_1,\ldots, i_m}
 \end{equation}
counts the total number of occurrences of $\tau$ in $\Sigma_n$.
For any $\tau\in\symmetricm$, for each $s=1,\ldots,m-1$ we define $L_s(\tau)$ as the {\em overlap of size $s$}, i.e., the number of permutations $\sigma\in\mathcal{S}_{2m-s}$ for which there are indices $1\leq i_1<\cdots<i_m\leq 2m-s$ and $1\leq j_1<\cdots<j_m\leq 2m-s$ such that $\{i_1,\ldots,i_m\}$ and $\{j_1,\ldots,j_m\}$ have exactly $s$ elements in common and 
\[\red(\sigma_{i_1}\cdots\sigma_{i_m})=\red(\sigma_{j_1}\cdots\sigma_{j_m})=\tau.\]

For consecutive pattern avoidance, we similarly define the set of all $m$-tuples of the form $\{i, i+1,\ldots, i+m-1\}$, $1 \leq i\leq n-m+1$, as
\[ \overline{J}_m:= \{\{i, i+1, \ldots, i+m-1\} : 1\leq i \leq n-m+1\}, \qquad m\in [n]. \]
Let $\overline{\mathbb{X}} \equiv \overline{\mathbb{X}}_m := (X_\alpha)_{\alpha\in\overline{J}_m}$, and 
let $\overline{\mathbb{B}} \equiv \overline{\mathbb{B}}_m = (B_\alpha)_{\alpha\in \overline{J}_m}$ denote a collection of independent Bernoulli random variables whose marginal distributions satisfy $\e B_\alpha = \e X_\alpha$ for each $\alpha \in \overline{J}_m$.  
For fixed $\tau\in\mathcal{S}_m$ and $\Sigma_n$ a uniform random permutation of $n$, we define the random variable
\begin{equation}\label{W'} \overline{W} := \sum_{1 \leq s \leq n-m+1} X_{s,s+1,\ldots,s+m-1},
\end{equation}
which counts the number of consecutive occurrences of $\tau$ in $\Sigma_n$.
We also define $\overline{L}_s(\tau)$ as the {\em sequential overlap of size $s$}, i.e., the number of permutations $\sigma\in\mathcal{S}_{2m-s}$ for which
\[\red(\sigma_{1}\cdots\sigma_{m})=\red(\sigma_{m-s+1}\cdots\sigma_{2m-s})=\tau.\]

\subsection{Main results on pattern avoidance}\label{main:corollaries}
We begin with several theorems specifically about pattern avoidance, which follow from the quantitative bounds given in Section~\ref{poisson limits}. 

\begin{thm}\label{theorem:asymptotic-general}
Assume $n \geq m \geq 3$ and $\tau$ is any pattern of length $m$.  Define
\begin{align}
\notag \lambda & = \binom{n}{m}/m!,  \qquad  d_1  = \binom{n}{m} \left( \binom{n}{m} - \binom{n-m}{m} \right) \frac{1}{m!^2}, \\
\label{d2} d_2 & = \sum_{s=1}^{m-1} \binom{n}{2m-s}\frac{2\, L_s(\tau)}{(2m-s)!}, 
\end{align}
and 
\[D_{n,m} = \min(1, \lambda^{-1})(d_1+d_2). \]
Then we have 
\begin{equation}\label{bounds}
n!\, e^{-\lambda}\left(1 - e^{\lambda} D_{n,m}\right) \leq |\symmetricn(\tau)| \leq n!\, e^{-\lambda}\left(1 + e^\lambda D_{n,m}\right).
\end{equation}

In addition, for any fixed $\epsilon\geq 0$, suppose $m\equiv m(n)$ is some increasing, integer-valued function of $n$ such that $m \geq (e\, e^{1/e}+\epsilon)\sqrt{n}$.
For any sequence of patterns $\tau_n\in\mathcal{S}_{m}$, we have
\[ \left| \frac{|\symmetricn(\tau)|}{n!} - e^{-\lambda}\right| = o(1)\quad\text{as }n\to\infty. \]
\end{thm}

\begin{rmk}
There are several noteworthy aspects to Theorem \ref{theorem:asymptotic-general}.  

\begin{enumerate}
\item The expression for $\lambda$ equals the expected number of occurrences of $\tau$ in a uniform random permutation of $[n]$, and thus is the same for all patterns of length $m$.
\item The expression for $d_1$ cannot be improved using our approach. 
\item We are not aware of any efficient means to calculate the values $L_s(\tau)$ in general.  For a simple and explicit upper bound, applicable for all patterns $\tau$ of length $m$, we have used 
\begin{equation}\label{d2:crude}
 d_2 \leq \binom{n}{m} \sum_{s=1}^{m-1} \binom{n-m}{m-s} \binom{m}{s}\frac{s!}{m!^2}, 
 \end{equation}
but we suspect that this bound can be improved. 
\end{enumerate}
\end{rmk}

It is tempting to conjecture that Theorem~\ref{theorem:asymptotic-general} holds even when $\lambda$ tends to some fixed positive constant, but we suspect this is not possible, as we now demonstrate.

\begin{lemma}[\cite{McKayPersonal}]
\label{lemma:mckay}
Fix any $t>0$ and let $\lambda=\binom{n}{m}/m!$.
Then $\lambda\rightarrow t$ as $n\to\infty$ provided 
\begin{equation}\label{j:asymptotic}
m \sim e \sqrt{n} - \frac{1}{4}\log(n) -\frac{1}{2}\log(2\pi t) - \frac{1}{4} e^2 - \frac{1}{2}\quad\text{as }n\rightarrow\infty.
\end{equation}
\end{lemma}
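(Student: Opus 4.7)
The plan is to apply Stirling's formula to $\lambda = \binom{n}{j}/j! = n!/(j!^2(n-j)!)$ and then solve asymptotically for $j$. First I would take logarithms and substitute the expansion $\log k! = k\log k - k + \tfrac{1}{2}\log(2\pi k) + O(1/k)$ into each of the three factorials, yielding
\[
\log\lambda = n\log n - 2j\log j - (n-j)\log(n-j) + 2j - (n-j) + n + \tfrac{1}{2}\log(2\pi n) - \log(2\pi j) - \tfrac{1}{2}\log(2\pi (n-j)) + o(1).
\]
Since we are targeting $j \sim e\sqrt{n}$, $j/n \to 0$, so I would expand $(n-j)\log(n-j) = (n-j)\log n + (n-j)\log(1-j/n)$ to second order in $j/n$; the useful identity is $(n-j)\log(n-j) = (n-j)\log n - j + \tfrac{j^2}{2n} + O(j^3/n^2)$, and the cubic remainder is $o(1)$ under $j = O(\sqrt{n})$.

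Next I would collect the surviving terms and use $j\log n - 2j\log j = j\log(e^2 n/j^2) - 2j$ to condense the expression to
\[
\log\lambda = j\log\!\left(\tfrac{e^2 n}{j^2}\right) - \tfrac{j^2}{2n} - \log(2\pi j) + o(1),
\]
the half-log $\tfrac{1}{2}\log(n/(n-j))$ being $o(1)$. Now I would substitute the ansatz $j = e\sqrt{n} + r$ with $r = O(\log n)$, using $\log(e^2n/j^2) = -2\log(1 + r/(e\sqrt{n})) = -2r/(e\sqrt{n}) + O(r^2/n)$, so that $j\log(e^2 n/j^2) = -2r + o(1)$, and also $j^2/(2n) = e^2/2 + o(1)$ and $\log(2\pi j) = \tfrac{1}{2}\log n + \log(2\pi e) + o(1)$.

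Finally I would impose $\log \lambda = \log t + o(1)$, which yields
\[
-2r - \tfrac{e^2}{2} - \tfrac{1}{2}\log n - \log(2\pi e) = \log t + o(1),
\]
and solving for $r$ gives $r = -\tfrac{1}{4}\log n - \tfrac{1}{2}\log(2\pi t) - \tfrac{e^2}{4} - \tfrac{1}{2} + o(1)$, as claimed. The main subtlety, rather than any deep obstacle, is the bookkeeping in step two: one must justify that every truncation error is genuinely $o(1)$ under the ansatz $j \sim e\sqrt{n}$, in particular the $O(j^3/n^2) = O(1/\sqrt{n})$ remainder from $\log(1-j/n)$ and the $O(r^2/n) = O((\log n)^2/n)$ error from expanding $\log(1+r/(e\sqrt n))$; once these estimates are verified a posteriori from the derived value of $r$, the derivation closes consistently.
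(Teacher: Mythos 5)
Your derivation is correct, and in fact the paper offers no proof of this lemma at all: it is stated with a citation to a personal communication of McKay, so your Stirling computation supplies an argument the paper only imports. The key steps check out: the expansion $(n-j)\log(n-j)=(n-j)\log n - j + \tfrac{j^2}{2n}+O(j^3/n^2)$ with $O(j^3/n^2)=o(1)$ for $j=O(\sqrt n)$, the condensation to $\log\lambda = j\log\left(\tfrac{e^2 n}{j^2}\right)-\tfrac{j^2}{2n}-\log(2\pi j)+o(1)$, and the substitution $j=e\sqrt n + r$ with $r=O(\log n)$ giving $\log\lambda=-2r-\tfrac{e^2}{2}-\tfrac12\log n-\log(2\pi e)+o(1)$, from which $r=-\tfrac14\log n-\tfrac12\log(2\pi t)-\tfrac{e^2}{4}-\tfrac12+o(1)$ follows, matching \eqref{j:asymptotic}. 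Two small points. First, your opening display has a sign slip in the linear terms: they should read $-n+2j+(n-j)=j$, not $+n+2j-(n-j)$; your subsequent condensed formula is consistent with the correct value, so this is only a transcription error, but it should be fixed. Second, you solve the inverse problem (impose $\log\lambda=\log t+o(1)$ and extract $r$) rather than the direction the lemma asserts; the computation is reversible, but for the record one should read it forwards: with $j$ as in \eqref{j:asymptotic} interpreted as holding up to an additive $o(1)$ term (the only reading under which the statement is true, since the mere relation $j\sim e\sqrt n$ would not pin down $\lambda$), the displayed formula for $\log\lambda$ yields $\log\lambda\to\log t$ directly, and the a posteriori check that $r=O(\log n)$ is automatic.
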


\begin{lemma}\label{lemma:d1bound}
Suppose $n$, $m$, and $n-m$ tend to infinity and $d_1$ is as defined in Theorem~\ref{theorem:asymptotic-general}. 
Then we have 
\[ d_1 \sim \lambda^2\left(1 - e^{-m^2/n}\right). \]
In particular, for any $\epsilon>0$ and $m \geq (e+\epsilon) \sqrt{n}$, we have $d_1 \to 0,$ and for 
$m\sim e\sqrt{n}-\frac{1}{4}\log(n)$, we have $d_1 \to c \in (0,\infty)$.  
\end{lemma}

It follows that a necessary condition for $d_1$ to tend to zero is that
\begin{equation}
\label{possible:j}m \gtrsim e \sqrt{n} - \left(\frac{1}{4}-\epsilon\right)\log(n)\ \quad\text{for any }\epsilon>0,
\end{equation}
where the notation $a \gtrsim b$ means that both $a \geq b$ and $a \sim b$. 
It is also well known, see \cite{VershikKerov,LoganShepp}, that the typical size of the longest increasing subsequence of a random permutation of size~$n$ is asymptotically of order $2 \sqrt{n},$ and so one cannot have a Poisson limit theorem which applies to the increasing pattern $12\ldots m$ with $m < 2 \sqrt{n}$.  
It is interesting to investigate the behavior in the gap, i.e., for $m \sim c \sqrt{n}$ with any $2 < c < e$, and we leave this as an open problem.

We also have an analogous theorem for consecutive patterns. 

\begin{thm}\label{theorem:asymptotic-consecutive}
Assume $n \geq m\geq 3$ and $\tau$ is any pattern of length $m$.  
Define 
\begin{align*}
\overline{\lambda} & = \frac{n-m}{m!},  \qquad 
\overline{d}_1  = \frac{2mn - 3m^2 +m}{m!^2}, \qquad \overline{d}_2  =  \sum_{s=1}^{m-1} \frac{(n-2m+s)\, 2\, \overline{L}_s(\tau)}{(2m-s)!},
\end{align*}
and 
\[ \overline{D}_{n,m} = \min(1, 1/\overline{\lambda})(\overline{d}_1+\overline{d}_2). \]
Then we have 
\begin{align}
\label{bounds-consec}
n!\, e^{-\overline{\lambda}}\left(1 - e^{\overline{\lambda}}\, \overline{D}_{n,m}\right) \leq |\symmetricntilde(\tau)| &\leq n!\, e^{-\overline{\lambda}}\left(1 + e^{\overline{\lambda}}\, \overline{D}_{n,m}\right).
\end{align}

In addition, fix any $t>0$, and define $M(t,n) := \left\lfloor \frac{\log(n/t)}{\log\log(n/t) - \log\log\log(n/t)} - \frac{1}{2}\right\rfloor$.  
Let $m \equiv m(n)$ be some increasing, integer-valued function of $n$ such that $m \geq M(t,n)$. 
For any sequence of patterns $\tau_n\in\symmetricm$, we have $\overline{D}_{n,m} \to 0$, and thus 
\[ \left| \frac{|\symmetricntilde(\tau)|}{n!} - e^{-\overline{\lambda}} \right| = o(1)\quad\text{as }n\to\infty. \]
Furthermore, define $T_n(\tau,k)$ as the number of permutations in $\symmetricn$ where pattern $\tau$ occurs exactly $k$ times, and take $m(n) = M(t,n)$.  
For any sequence of patterns $\tau_n\in\symmetricm$, we have 
\[ \sum_{k=0}^\infty \left| \frac{T_n(\tau, k)}{n!} - \frac{t^k}{k!} e^{t} \right| = o(1)\quad\text{as }n\to\infty. \]
\end{thm}

In Section~\ref{subsection_Mallows}, we present an analogous result for permutations chosen according to the Mallows($q$) distribution. 

\subsection{Main results on uniform permutations} 
\label{poisson limits}

Theorems~\ref{theorem:asymptotic-general} and \ref{theorem:asymptotic-consecutive} provide an asymptotic analysis for sequences of patterns which also grow in size. 
It is too much to expect a general asymptotic formula for any fixed pattern---we have already noted the difficulty of nailing down the asymptotic growth of 1324-avoiding sets---but Poisson approximation, see Section \ref{section:poisson}, provides a general approach for obtaining preasymptotic bounds on various quantities when all sizes are fixed.

\begin{thm}\label{theorem:main} 
Assume $n \geq m \geq 3$ and $\tau$ is any pattern of length $m$.   Recalling the definition of $X_{\alpha}$ in \eqref{eq:indicator}, we let $\mathbb{X} = (X_\alpha)_{\alpha\in J_m}$ be the collection of all such variables and 
 $\mathbb{B} = (B_\alpha)_{\alpha\in  J_m}$ be an independent Bernoulli process with marginal distributions satisfying $\e B_\alpha = \e X_\alpha$ for all $\alpha \in  J_m$.  
We have 
\begin{align}
\label{bounds-TV}
d_{TV}(\mathcal{L}(\mathbb{X}), \mathcal{L}(\mathbb{B})) &\leq 4D_{n,m} + \frac{2\lambda}{m!},
\end{align}
where $D_{n,m}$ and $\lambda$ are as defined in Theorem~\ref{theorem:asymptotic-general}. 
Furthermore, with $W$ defined as in~\eqref{eq:W}, and for $Y$ a Poisson random variable with mean $\lambda = \e W$, we have
\[ d_{TV}(\mathcal{L}(W), \mathcal{L}(Y)) \leq 2D_{n,m}.  \]
\end{thm}

\begin{thm}\label{consecutive patterns}
Assume $n \geq m\geq 3$ and $\tau$ is any pattern of length $m$.  Recall the definitions of $\overline{\mathbb{X}}$, $\overline{\mathbb{B}}$, and $\overline{W}$ given in Section~\ref{section:definitions}. 
We have 
\begin{align}
\label{uniform consecutive bounds}
d_{TV}\left(\LOX), \LOB\right) & \leq 4\overline{D}_{n,m} + \frac{2\overline{\lambda}}{m!},
\end{align}
where $\overline{D}_{n,m}$ and $\overline{\lambda}$ are as defined in Theorem~\ref{theorem:asymptotic-consecutive}.
Let $\overline{Y}$ denote a Poisson random variable with parameter $\overline{\lambda}$.  We have
\[ d_{TV}\left(\mathcal{L}\left(\overline{W}\right), \mathcal{L}\left(\overline{Y}\right)\right) \leq 2\overline{D}_{n,m}.  \]
\end{thm}

\subsection{Main results on Mallows permutations} 
\label{subsection_Mallows}
In Section \ref{section:Mallows}, we discuss several special properties of the Mallows distribution that are useful for studying consecutive pattern avoidance.
Using these properties, we obtain analogous bounds to those in Theorems \ref{theorem:main} and \ref{consecutive patterns}.

Recall the definition of the {\em restriction} $\Sigma_{n|A}$ of $\Sigma_n$ to a subset $A\subseteq[n]$, as defined at the beginning of Section \ref{section:pattern}.
Also recall $\overline{J}_m$ denotes 
the set of subsets of size $m$ whose elements are consecutive in $\{1,2,\ldots, n\}$.  

\begin{thm}\label{main:mallows:theorem}
Fix $q>0$ and let $\Sigma_n\sim\text{Mallows}(q)$ as defined in \eqref{eq:Mallows}.
For any $m \geq 2$, 
fix any patterm $\tau_m$ of size $m$.  For any $\alpha \in \overline{J}_m$, let
\[ X_\alpha = \mathbb{I}(\red(\Sigma_{n|\alpha})=\tau_m)\]
and $W = \sum_{\alpha \in \overline{J}_m} X_\alpha$, and let $Y$ be an independent Poisson random variable with expected value $\lambda = \e W$.  
Then
\[ d_{TV}(\mathcal{L}(W), \mathcal{L}(Y)) \leq 2(b_1+b_2), \]
where
\begin{align}
\notag \lambda&=(n-m)\frac{q^{|\inv(\tau_m)|}}{I_m(q)}, \quad
 b_1 = \frac{(2 m n-3 m^2+m)\, q^{2 |\inv(\tau_m)|}}{I_m(q)^2}, \quad 
 b_2 = \sum_{s=1}^{m-1} (n-2m+s) \sum_{\rho \in \overline{L}_s(\tau_m)} \frac{q^{|\inv(\rho)|}}{I_{2m-s}(q)},
\notag  
 \end{align}
 where $I_m(q)$ is as defined following \eqref{eq:Mallows}.
\end{thm}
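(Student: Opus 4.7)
The plan is to apply the Chen--Stein/Arratia--Goldstein Poisson approximation framework used in the proof of Theorem~\ref{consecutive patterns}, adapted to the Mallows setting. For each $\alpha \in \overline{\Gamma}_m$, declare the dependency neighborhood $B_\alpha := \{\beta \in \overline{\Gamma}_m : \beta \cap \alpha \neq \emptyset\}$. Once I verify that $X_\alpha$ is independent of the $\sigma$-algebra generated by $\{X_\beta : \beta \notin B_\alpha\}$, the Arratia--Goldstein theorem instantly delivers the claimed bound $d_{TV}(\mathcal{L}(W),\mathcal{L}(Y)) \leq 2(b_1 + b_2)$, with $b_1$ and $b_2$ defined by the usual sums over intersecting pairs. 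Thus the substance of the proof lies in (i) identifying $\e X_\alpha$ and $\e[X_\alpha X_\beta]$ explicitly, and (ii) establishing the requisite independence under the Mallows measure.

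The key tool is the following factorization property of the Mallows distribution, which I will isolate as a preliminary step (and which is presumably worked out in Section~\ref{section:Mallows}): for any consecutive block $\alpha = \{j+1,\ldots,j+m\}$, decompose the inversions of $\Sigma_n$ into (a) pairs with both coordinates in $\alpha$, (b) pairs with both coordinates in $[n]\setminus\alpha$, and (c) cross pairs with exactly one coordinate in $\alpha$. A direct check shows the cross contribution depends only on the value set $V_\alpha := \{\Sigma_n(i) : i \in \alpha\}$ and the permutation restricted to $[n]\setminus\alpha$, not on the internal arrangement within $\alpha$. Hence, conditionally on $V_\alpha$ and on $\Sigma_n|_{[n]\setminus\alpha}$, the reduction $\red(\Sigma_n|_\alpha)$ is Mallows$(q)$ on $\symmetricm$. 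This immediately yields $\e X_\alpha = q^{|\inv(\tau_m)|}/I_m(q)$, whence $\lambda = (n-m)q^{|\inv(\tau_m)|}/I_m(q)$ as stated. Running the same decomposition on a family of pairwise-disjoint consecutive blocks shows that the joint conditional law of the internal reductions is a product of Mallows$(q)$ distributions on $\symmetricm$, and because this conditional law does not depend on the value sets, the unconditional reductions on disjoint consecutive blocks are mutually independent Mallows$(q)$ permutations. This supplies the independence of $X_\alpha$ from $\{X_\beta : \beta \cap \alpha = \emptyset\}$ required above.

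For the cross-term, when $\alpha,\beta \in \overline{\Gamma}_m$ overlap in $s$ positions, $\alpha \cup \beta$ is itself a consecutive block of length $2m-s$, so the same factorization applied to this longer block gives $\red(\Sigma_n|_{\alpha\cup\beta})\sim \mathrm{Mallows}(q)$ on $\mathcal{S}_{2m-s}$. The event $\{X_\alpha X_\beta = 1\}$ is precisely $\{\red(\Sigma_n|_{\alpha\cup\beta})\in\overline{L}_s(\tau_m)\}$, giving $\e[X_\alpha X_\beta] = \sum_{\rho \in \overline{L}_s(\tau_m)} q^{|\inv(\rho)|}/I_{2m-s}(q)$. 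Summing over ordered pairs of intersecting blocks then produces the quoted expressions for $b_1$ and $b_2$; the count $n_1 = 2mn - 3m^2 + m$ comes from bounding the number of ordered intersecting pairs $(\alpha,\beta)$ of length-$m$ consecutive blocks, and for each overlap $s \in \{1,\ldots,m-1\}$ there are $n-2m+s$ ordered overlapping pairs of the appropriate type.

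The principal technical obstacle is the factorization step, and within it the verification that the \emph{parameter} of the induced Mallows law on the block is literally $q$ rather than some shifted or rescaled version; the cross-term inversions, which depend on the interaction of the block's value set with the exterior, must be shown to contribute an identical multiplicative factor to every internal ordering, so that only the internal inversions retain a $q$-weighting. Once that is in hand, everything else is a substitution into the Arratia--Goldstein inequality and an elementary count of intersecting intervals; in particular, the argument requires no assumption on $q>0$ and never invokes exchangeability, which is precisely why the bound is \emph{not} restricted to the uniform case $q=1$.
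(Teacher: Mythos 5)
Your proposal is correct and reaches the theorem by the same overall route the paper takes: fix the dependency neighborhoods $B_\alpha=\{\beta:\beta\cap\alpha\neq\emptyset\}$, verify that $b_3=0$, compute $\e X_\alpha$ and $\e X_\alpha X_\beta$ from the fact that the reduction of a consecutive block (and of the union $\alpha\cup\beta$, itself a consecutive block of length $2m-s$) is Mallows$(q)$, and then count intersecting pairs before substituting into the Arratia--Goldstein--Gordon bound (Theorem~\ref{Arratia}); this is exactly how the paper argues, where the result is presented as the generalization of Proposition~\ref{prop:increasing}. The one genuine difference is how the two Mallows-specific ingredients are justified. The paper derives consecutive homogeneity and weak dissociation (Lemmas~\ref{lemma:homo} and~\ref{lemma:dissociated}) from the sequential ordering/bumping constructions of the Mallows process, together with the reversal identity $P_n^q(\sigma)=P_n^{1/q}(\sigma^r)$; you instead obtain both at once by splitting $|\inv(\Sigma_n)|$ into internal, external, and cross inversions of a consecutive block and observing that the cross term depends only on the block's value set and the exterior arrangement, so that conditionally the internal reduction is Mallows$(q)$ regardless of the conditioning. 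Your factorization argument is more self-contained and arguably more transparent about \emph{why} consecutiveness is needed (for a non-consecutive index set the cross inversions do depend on the internal arrangement), and it delivers the independence needed for $b_3=0$ in one stroke; the paper's constructions, on the other hand, are reused elsewhere (e.g.\ in the $q$-reversal trick and the discussion of the Mallows process), which is what they buy. Your remaining steps --- identifying $\{X_\alpha X_\beta=1\}$ with $\{\red(\Sigma_{n|\alpha\cup\beta})\in\overline{L}_s(\tau_m)\}$ and the interval counts giving $n_1=2mn-3m^2+m$ and the $(n-2m+s)$ factors --- match the paper's computation in Proposition~\ref{prop:increasing}, so there is no gap to flag.
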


Asymptotic formulas and Poisson limit theorems for general Mallows permutations depend on the interplay  between the parameters $n$, $m$, $|\inv(\tau)|$, and $q$.
In particular, we need the expected number of occurrences to converge to a constant $\lambda\in(0,\infty)$.
In the case of consecutive pattern avoidance, the expected number of occurrences of a pattern $\tau\in\symmetricn$ in $\Sigma_n\sim\text{Mallows}(q)$ is
\[\lambda=(n-m)q^{|\inv(\tau)|}/I_m(q),\]
which, for $m$ fixed, produces non-trivial limiting behavior whenever 
\begin{align*}
q\sim n^{-1/|\inv(\tau)|}\quad&\text{or}\quad q\sim n^{1 / \left({m\choose2}-|\inv(\tau)|\right)}\quad\text{as }n\to\infty.
\end{align*}
We can also allow $m$ to vary and keep $q$ fixed so that
\begin{align*}
|\inv(\tau)|\sim-\log(n)/\log(q)\quad&\text{or}\quad|\inv(\tau)|\sim-\log(n)/\log(q)+m^2/2\quad\text{as }n\to\infty. 
\end{align*}
Combined with Theorem \ref{main:mallows:theorem}, these observations yield Theorem~\ref{cor:mallows} below.

\begin{thm}\label{cor:mallows}
Let $m\equiv m(n)$ be a non-decreasing integer-valued sequence, $\tau_n\in\symmetricm$ be a sequence of patterns, and $q\equiv q(n)$ be a sequence of parameters.
For each $n\geq1$, let $\Sigma_n$ be a random permutation from the Mallows distribution \eqref{eq:Mallows} with parameter $q(n),$ with $\overline{\mathbb{X}}_q$ and $\overline{\mathbb{B}}_q$ defined analogously.  
For any measurable function $h : \{0,1\}^{n-m+1} \to \R$ and Borel set $A \subseteq \R$, we have
\[ \mathbb{P}( h(\overline{\mathbb{X}}_q) \in A) = \mathbb{P}( h(\overline{\mathbb{B}}_q) \in A) + o(1), \]
provided either
\begin{align*}
q(n)&\leq n^{-1/\inv(\tau_{m(n)})}\quad\text{for almost all }n\geq1,\\
q(n)&\geq n^{1/(\binom{m(n)}{2}-|\inv(\tau_{m(n)})|)}\quad\text{for almost all }n\geq1,\\
|\inv(\tau_{m(n)})|&\leq-\log(n)/\log(q(n))\quad\text{for almost all }n\geq1\text{ and }q<1\quad\text{or}\\
|\inv(\tau_{m(n)})|&\geq-\log(n)/\log(q(n))+m(n)^2/2\quad\text{for almost all }n\geq1\text{ and }q>1.
\end{align*}
\end{thm}

In Section~\ref{section:3}, we demonstrate Theorem~\ref{main:mallows:theorem} for all patterns of length 3.  
In Section~\ref{section:detailed}, we compute the bounds in Theorem~\ref{main:mallows:theorem} for the specific patterns 2341 and 23451 and we plot the estimated pattern avoidance probabilities in the appropriate asymptotic regime for $q$ from Theorem~\ref{cor:mallows}.

\subsection{Classical pattern avoidance for Mallows permutations}\label{section:classical-Mallows}

We conclude this section by commenting that our approach is silent about classical pattern avoidance in Mallows permutations.  The reason for this limitation is readily seen by noting the interplay between the Arratia--Goldstein--Gordon Theorem (Theorem \ref{Arratia}) and the homogeneity properties of the Mallows distribution, or lack thereof, highlighted in Section \ref{section:Mallows-prop} below.  Briefly, for any pattern $\tau\in\mathcal{S}_m$ and any $1\leq i_1<\ldots<i_m\leq n$, the probability that $\tau$ occurs in positions $i_1,\ldots,i_m$ of a uniform random permutation of $[n]$ is $1/m!$, regardless of the choice of indices $i_1,\ldots,i_m$.  This property is crucial to computing the constants $b_1$ and $b_2$ in our application of Theorem \ref{Arratia}, and allows us to obtain quantitative bounds when permutations are assumed to be drawn uniformly at random.

The same property does not hold for Mallows permutations.  Instead, Mallows permutations only satisfy the weaker property of {\em consecutive homogeneity}, by which the probability that a pattern $\tau$ occurs in consecutive locations $j,j+1,\ldots,j+m-1$ is the same regardless of $j$; see Section \ref{section:Mallows-prop}.  We know of no general formula for computing the probability that a given pattern appears in non-consecutive locations of a Mallows permutation.  Without such a formula, we have no systematic way to extend our results in this direction, and so we leave this as an open problem.

\section{Poisson approximation}\label{section:poisson}

\subsection{Chen--Stein method}
Stein's method is an approach to proving the central limit theorem that was adapted by Chen to Poisson convergence \cite{chen1975poisson}.  The advantage of this method is that it provides guaranteed error bounds on the total variation distance between the distribution of a sum of possibly dependent random variables and the distribution of an independent Poisson random variable with the same mean.  

\begin{thm}[Chen \cite{chen1975poisson}]\label{ChenStein}
Suppose $X_1, X_2, \ldots, X_n$ are indicator random variables with expectations $p_1, p_2, \ldots, p_n$, respectively, and let $W = \sum_{i=1}^n X_i$.  Let $Y$ denote an independent Poisson random variable with expectation $\lambda = \sum_{i=1}^n p_i$.  Suppose, for each $i \geq 1$, a random variable $V_i$ can be constructed on the same probability space as $W$ such that
\[ \mathcal{L}(1+V_i) = \mathcal{L}(W \mid X_i=1).
\]
Then
\begin{equation}\label{dtv bound} 
d_{TV}(\mathcal{L}(W), \mathcal{L}(Y)) \leq \frac{1-e^{-\lambda}}{\lambda}\sum_{i=1}^n p_i \e |W - V_i|.
\end{equation}
\end{thm}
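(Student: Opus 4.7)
\medskip

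\noindent\textbf{Proof proposal.}
The plan is to deploy the standard Stein--Chen coupling machinery. First I would introduce the Stein operator for the Poisson$(\lambda)$ law, namely $(\mathcal{A}g)(k) := \lambda g(k+1) - k g(k)$ acting on bounded functions $g:\mathbb{Z}_{\geq 0}\to\mathbb{R}$, which satisfies $\mathbb{E}[(\mathcal{A}g)(Y)]=0$ for $Y\sim\mathrm{Poisson}(\lambda)$. For each subset $A\subseteq\mathbb{Z}_{\geq 0}$, I would then invoke the Stein equation $(\mathcal{A}g_A)(k) = \mathbf{1}_A(k) - P(Y\in A)$, whose unique bounded solution $g_A$ exists and, by a standard Barbour--Eagleson estimate, satisfies the crucial uniform Lipschitz bound
\[
\|\Delta g_A\|_\infty := \sup_k |g_A(k+1) - g_A(k)| \leq \frac{1-e^{-\lambda}}{\lambda}.
\]
This Lipschitz estimate is the only analytic input and I would quote it from the Chen--Stein literature rather than re-derive it.

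Next I would evaluate the Stein equation at $W$ and take expectations, yielding
\[
P(W\in A) - P(Y\in A) = \mathbb{E}[\lambda g_A(W+1) - W g_A(W)].
\]
For the second term, decompose $W = \sum_i X_i$ and condition: since each $X_i$ is an indicator,
\[
\mathbb{E}[W g_A(W)] = \sum_i p_i\, \mathbb{E}[g_A(W)\mid X_i=1] = \sum_i p_i\, \mathbb{E}[g_A(1+V_i)],
\]
where the second equality is the coupling hypothesis $\mathcal{L}(1+V_i) = \mathcal{L}(W\mid X_i=1)$. For the first term, simply write $\lambda = \sum_i p_i$ to get $\lambda\,\mathbb{E}[g_A(W+1)] = \sum_i p_i\,\mathbb{E}[g_A(W+1)]$.

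Combining, the difference telescopes into
\[
P(W\in A) - P(Y\in A) = \sum_i p_i\,\mathbb{E}\bigl[g_A(W+1) - g_A(V_i+1)\bigr],
\]
and the Lipschitz bound on $\Delta g_A$ together with the elementary inequality $|g_A(a)-g_A(b)| \leq \|\Delta g_A\|_\infty |a-b|$ yields
\[
|P(W\in A)-P(Y\in A)| \leq \frac{1-e^{-\lambda}}{\lambda}\sum_i p_i\,\mathbb{E}|W-V_i|.
\]
Taking the supremum over $A\subseteq\mathbb{Z}_{\geq 0}$ recovers the total variation bound in \eqref{dtv bound}. The only real obstacle is the Lipschitz estimate on $\Delta g_A$; everything else is bookkeeping via the coupling. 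Since that bound is classical, I would simply cite it, making the proof essentially a two-line rearrangement after the coupling substitution.
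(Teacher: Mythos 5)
Your argument is correct: it is the standard Stein--Chen coupling proof (Stein equation for the Poisson law, the Barbour--Eagleson bound $\|\Delta g_A\|_\infty\leq(1-e^{-\lambda})/\lambda$, and the substitution $\e[g_A(W)\mid X_i=1]=\e[g_A(1+V_i)]$ from the coupling hypothesis), and quoting the Lipschitz estimate for the Stein solution is legitimate. The paper itself gives no proof of this theorem---it is cited directly from Chen's 1975 work---so your write-up simply reproduces the classical argument behind the citation, with no substantive divergence to report.
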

\ignore{\marginpar{
For the coupling below: suppose $\sigma=21543$ and $i=1$.  So $\sigma(i)=2$ and the coupling says ``swap elements $i$ and $j$'' to get $\sigma'=12543$.  Now the number of fixed points has gone from 1 in $\sigma$ to 3 in $\sigma'$ for a difference of 2.
It seems to me that the displayed equation below should have a difference of 2 whenever $i$ is in a 2-cycle.
}}
\subsection{Fixed points example}\label{section:fixed}
To see how Theorem \ref{dtv bound} can be applied, let $e(n)$ denote the number of fixed-point free permutations of $[n]$.  With $\Sigma_n$ a uniform permutation of $[n]$, we define indicator random variables  
\[X_i = \mathbb{I}(\text{ $i$ is a fixed point of $\Sigma_n$ }), \qquad i=1,\ldots,n.
\]
(Note that these random variables are \emph{not} independent.)  
We then define the sum
\[ W = \sum_{i=1}^n X_i \]
so that $P(W = 0) = e(n)/n!$ and $\lambda = \mathbb{E} W=\sum_{i=1}^n \frac{1}{n} = 1$, the expected number of fixed points.  
Even before we proceed with the bound, we obtain the heuristic estimate of $n!\, e^{-1}$ for $e(n)$, just as in \eqref{eq:derangement}.

To apply Theorem~\ref{ChenStein}, we need to construct an explicit coupling of $W$ and $1+V_i$ on the same probability space.  This is done for more general restrictions in \cite{barbour1992poisson}, but we shall write out the full calculation on fixed points to illustrate the basic premise.

We take $1+V_i$ to be the random sum $W$ conditioned on $X_i = 1$.  For a random permutation $\sigma$, suppose $\sigma(i) = j$, for some $j\in [n]$.  The coupling is: swap elements $i$ and $j$.  The resulting permutation has the same marginal distribution as a random permutation conditioned to have a fixed point at $i$.  In fact, $|W - V_i| \in \{0,1,2\}$ for each $i$ since we modify at most 2 elements, and the elements not involved in the swap cancel out (i.e., any fixed points occurring on indices other than these swapping positions remain unchanged).  Let us denote the random variables after the coupling by $X_1', X_2', \ldots, X_n'$; that is, $\mathcal{L}(X_j') = \mathcal{L}(X_j \mid X_i=1)$, so that $1+V_i = \sum_{j=1}^n X_j'$.  
We have 
\[
|W - V_i| = \left|X_i+\sum_{k\neq i} (X_k - X_k')\right| = |X_i + X_J\mathbb{I}(J = \sigma(i), J\neq i)| = \begin{cases} 0, & \sigma(i) \neq i, \text{$i$ not in a 2-cycle}, \\
				      1, & \sigma(i) = i, \\
				       2, & \text{$i$ in a 2-cycle}.
				       \end{cases}
				       \]
The probability that two given elements $i$ and $j$ are part of a 2-cycle is precisely $1/(n(n-1))$, and the probability that $i$ is part of a 1-cycle is $1/n$.  Thus, 
\[ \e |W - V_i| = \frac{1}{n}+\frac{2}{n} = \frac{3}{n}, \]
and equation~\eqref{dtv bound} becomes
\[ 
d_{TV}(\mathcal{L}(W), \mathcal{L}(Y)) \leq (1 - e^{-1}) \sum_{i=1}^n \frac{1}{n} \frac{3}{n} = \frac{3(1-e^{-1})}{n}. 
\]
For all $n \geq 1$, we have 
\[
|P(W=0) - P(Y=0)| = \left| \frac{e(n)}{n!} - e^{-1} \right| \leq \sup_i | P(W=i) - P(Y=i)| \leq d_{TV}(W,Y) \leq \frac{3(1-e^{-1})}{n}.\]
Rearranging yields
\[n! e^{-1} - 3(n-1)! (1-e^{-1}) \leq e(n) \leq n! e^{-1} + 3(n-1)! (1-e^{-1}). \]
Note that this is a guaranteed error bound that holds for all $n\geq 1$, and as a corollary we get $e(n) = n! e^{-1}(1+o(n^{-1}))$.

The error bounds derived from the Chen--Stein method can be improved in special cases, e.g., $e(n)$ above can be obtained exactly by rounding $n!/e$ to the nearest integer for all $n \geq 1$, but the appeal of Poisson approximation is that it applies more generally.

\subsection{The Arratia--Goldstein--Gordon Theorem}
Arratia, Goldstein, \& Gordon \cite{ArratiaGoldstein} provide another approach to Poisson approximation that is sometimes more practical for Poisson approximation.

\begin{thm}[Arratia, Goldstein, \& Gordon \cite{ArratiaGoldstein}]\label{Arratia}
Let $I$ be a countable set of indices and, for each $\alpha \in I$, let $X_\alpha$ be an indicator random variable.
Let $\mathbb{X} = (X_\alpha)_{\alpha\in I}$ denote a collection of (possibly dependent) Bernoulli random variables and 
let $\mathbb{B} = (B_\alpha)_{\alpha\in I}$ denote a collection of independent Bernoulli random variables with marginal distributions which satisfy $\e B_\alpha = \e X_\alpha$ for all $\alpha \in I$.  
Define $p_\alpha := \e X_\alpha = P(X_\alpha = 1) >0$ and $p_{\alpha\beta} := \e X_\alpha X_\beta$.  
Also define $W := \sum_{\alpha \in I} X_\alpha$ and $\lambda := \e W = \sum_{\alpha\in I} p_\alpha$.  For each $\alpha \in I$, define sets $D_\alpha \subset I$ and the quantities
\begin{align}
\notag 		 b_1 &:= \sum_{\alpha \in I} \sum_{\beta \in D_\alpha} p_\alpha p_\beta, \\
\label{eq:b2} 	 b_2 &:= \sum_{\alpha \in I} \sum_{\alpha \neq \beta\in D_\alpha} p_{\alpha \beta}, \quad\text{and}\\
\notag		 b_3 &:= \sum_{\alpha\in I} \e |\e\{X_\alpha - p_\alpha \mid \sigma(X_\beta: \beta \notin D_\alpha) \} |,
\end{align}
where $\sigma(X_\beta: \beta \notin D_\alpha)$ denotes the smallest  $\sigma$-algebra containing $X_\beta$. 
We have
\[ d_{TV}(\mathcal{L}(\mathbb{X}), \mathcal{L}(\mathbb{B})) \leq 2(2b_1 + 2b_2 + b_3) + 2 \sum_{\alpha \in I} p_\alpha^2. \]
Furthermore, let $Y$ denote an independent Poisson random variable with mean $\lambda = \e W$.  
We have 
\[ d_{TV}(\mathcal{L}(W), \mathcal{L}(Y)) \leq 2(b_1+b_2+b_3), \]
and also 
\[ |P(W = 0) - P(Y=0)| \leq \left(b_1+b_2+b_3\right) \frac{1-e^{-\lambda}}{\lambda}. \]
\end{thm}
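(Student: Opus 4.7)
The plan is to apply Stein's method for Poisson approximation, augmented with the decomposition afforded by the local dependence neighborhoods $D_\alpha$. First I would solve the Stein equation: for a test function $f : \mathbb{Z}_{\geq 0} \to \mathbb{R}$, seek $g$ satisfying $\lambda g(k+1) - k g(k) = f(k) - \e f(Y)$. Standard Barbour--Eagleson estimates give $\|\Delta g\|_\infty := \sup_k |g(k+1) - g(k)| \leq \min(1, 1/\lambda)$ whenever $f = \mathbb{I}_A$ for some $A \subseteq \mathbb{Z}_{\geq 0}$, and the sharper bound $\|\Delta g\|_\infty \leq (1-e^{-\lambda})/\lambda$ in the special case $f = \mathbb{I}_{\{0\}}$, which will be the source of the final avoidance-probability inequality.

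Next, to bound $d_{TV}(\mathcal{L}(W), \mathcal{L}(Y))$, I would rewrite
\[ \e[\lambda g(W+1) - W g(W)] = \sum_{\alpha \in I} \bigl( p_\alpha \,\e g(W+1) - \e[X_\alpha g(W)] \bigr), \]
introduce $Z_\alpha := \sum_{\beta \notin D_\alpha} X_\beta$ and $Y_\alpha := W - Z_\alpha$, and pivot through the intermediate quantity $g(Z_\alpha + 1)$. On the event $\{X_\alpha = 1\}$, telescoping $g(W) - g(Z_\alpha + 1)$ across the remaining dependent summands in $Y_\alpha - 1$ and bounding by $\|\Delta g\|_\infty$ produces the $b_1$ contribution (from the crude estimate $p_\alpha \sum_{\beta \in D_\alpha \setminus \{\alpha\}} p_\beta$) and the $b_2$ contribution (from the genuine covariance $\sum_{\beta \in D_\alpha \setminus \{\alpha\}} p_{\alpha \beta}$). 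A parallel telescoping applied to $p_\alpha \,\e g(W+1) - p_\alpha \,\e g(Z_\alpha + 1)$ absorbs another $b_1$ factor.

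What remains is $\sum_\alpha \e[(X_\alpha - p_\alpha) g(Z_\alpha + 1)]$. Conditioning on $\sigma(X_\beta : \beta \notin D_\alpha)$, under which $g(Z_\alpha + 1)$ is measurable, and again invoking $\|\Delta g\|_\infty$, converts this into the $b_3$ contribution. Summing over $\alpha$ and using the multiplier $\min(1,1/\lambda)$ yields $d_{TV}(\mathcal{L}(W),\mathcal{L}(Y)) \leq 2(b_1+b_2+b_3)$; the $|P(W=0) - P(Y=0)|$ inequality then follows by instead using the sharper $\|\Delta g\|_\infty$ estimate for $f = \mathbb{I}_{\{0\}}$.

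For the process-level bound $d_{TV}(\mathcal{L}(\mathbb{X}), \mathcal{L}(\mathbb{B}))$, I would triangulate through a Poisson point process with rates $\{p_\alpha\}_{\alpha \in I}$: write $d_{TV}(\mathcal{L}(\mathbb{X}), \mathcal{L}(\mathbb{B})) \leq d_{TV}(\mathcal{L}(\mathbb{X}), \mathrm{PPP}) + d_{TV}(\mathrm{PPP}, \mathcal{L}(\mathbb{B}))$. The first summand is handled by replaying the three-term Stein--Chen argument above for test functionals on $\{0,1\}^I$ rather than on $\mathbb{Z}_{\geq 0}$, producing $2(2b_1 + 2b_2 + b_3)$ (the extra factor of $2$ reflects that no averaging of $\|\Delta g\|$ across coordinates occurs at process level). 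The second summand is $2 \sum_\alpha p_\alpha^2$, coming from the coordinatewise gap $d_{TV}(\mathrm{Bern}(p_\alpha), \mathrm{Poisson}(p_\alpha)) \leq p_\alpha^2$ and the fact that independence across coordinates makes the total variation distance additive under product measures. The main obstacle is the simultaneous bookkeeping of these three dependence-structure error terms with sharp constants at the process level; this is precisely the content of the original Arratia--Goldstein--Gordon analysis in \cite{ArratiaGoldstein}.
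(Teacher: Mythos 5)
The paper does not prove this theorem; it is imported verbatim from Arratia--Goldstein--Gordon \cite{ArratiaGoldstein}, so there is no internal proof to compare against, and your sketch is essentially the standard Chen--Stein argument from that reference (the Stein equation with Barbour--Eagleson bounds, the decomposition $W=Z_\alpha+Y_\alpha$ over the neighborhoods $D_\alpha$ yielding the $b_1,b_2,b_3$ terms, and triangulation through a Poisson process plus coordinatewise $p_\alpha^2$ errors for the process-level bound). The route is sound up to two cosmetic slips that do not affect the stated constants: the $b_3$ term is controlled by $\|g\|_\infty$ (bounded by $\min(1,1.4\lambda^{-1/2})$) rather than $\|\Delta g\|_\infty$, and the bound $(1-e^{-\lambda})/\lambda$ on $\|\Delta g\|_\infty$ holds for every indicator test function, not only for $A=\{0\}$; likewise total variation over product measures is subadditive (which is all you need), not additive.
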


In our applications, we are able to define sets $D_\alpha$, $\alpha\in I,$ so that $b_3 = 0$ always holds;
whence, the calculations of the bounds in our theorems require only calculations involving first and second (unconditioned) moments.  
For uniform random permutations this is straightforward, but establishing the analogous properties for consecutive patterns under random Mallows permutations is less obvious. 

\section{Consecutive pattern avoidance of Mallows permutations}
\label{section:Mallows}

For any permutation $\sigma=\sigma_1\cdots\sigma_n$, we define its {\em reversal} by $\sigma^r=\sigma_n\cdots\sigma_1$.
By the definition of the Mallows distribution in \eqref{eq:Mallows}, it is apparent that $P_n^q(\sigma)=P_n^{1/q}(\sigma^r)$ for all $\sigma\in\symmetricn$, and so we can focus on the case $0<q\leq 1$ in our analysis.

\subsection{Sequential construction}
The Mallows distribution \eqref{eq:Mallows} enjoys several nice properties that are amenable to the study of pattern avoidance.
These properties are readily observed by the following sequential constructions, both of which are well known and have been leveraged in previous studies of the Mallows distribution; see, for example, \cite{Bhatnagar2014,GnedinOlshanski2009}.
While the properties below are well known, we are not aware of their appearance in relation to pattern avoidance.
We provide proofs for completeness.

For $q>0$, we say that random variable $X$ has the {\em truncated Geometric($q$)} distribution on $[n],$ written as $X~\sim~\text{Geometric}(n,q)$, when the point probabilities of $X$ are given by
\begin{equation}\label{eq:finite-geometric}
P^{n,q}(X=k)=q^{k-1}/(1+\cdots+q^{n-1}),\quad k=1,\ldots,n.
\end{equation}
A Mallows permutation can be generated from the truncated Geometric distribution in two ways, which we call the {\em ordering} and {\em bumping} constructions.

For the ordering construction, we generate $X_1,X_2,\ldots$ independently, with each $X_n$ distributed as $\text{Geometric}(n,1/q)$.
To initialize, we have $\Sigma_1=1$, the only permutation of $[1]$.
Given $\Sigma_n=\sigma_1\cdots\sigma_n$ and $X_{n+1}=k$, we define 
\[\Sigma_{n+1}=\sigma_1\cdots\sigma_{k-1}(n+1)\sigma_k\cdots\sigma_n.\]
For every $n=1,2,\ldots$, it  is apparent that $\Sigma_n$ is a Mallows$(q)$ permutation because the probability that element $n+1$ appears in position $k$ of $\Sigma_{n+1}$ is
\[P(\Sigma_{n+1}(k)=n+1)=P^{n+1,1/q}(X=k)=P^{n+1,q}(X=n+1-k)=q^{n+1-k}/(1+q+\cdots+q^n).\]
Since $X_1,\ldots,X_n$ are chosen independently and each event $\{\Sigma_n=\sigma\}$ corresponds to exactly one sequence $X_1,\ldots,X_n$, we observe
\[P(\Sigma_n=\sigma)=q^{|\inv(\sigma)|}/I_n(q),\quad \sigma\in\symmetricn,\]
as in \eqref{eq:Mallows}.

\begin{defn}[Mallows process]\label{defn:Mallows process}
A collection $(\Sigma_n)_{n\geq1}$ generated by the ordering construction for fixed $q>0$ is called a {\em Mallows$(q)$ process}.
\end{defn}

For the bumping construction, we generate $X_1,X_2,\ldots$ independently with each $X_n$ distributed as $\text{Geometric}(n,1/q)$ as before, and again we initialize with $\Sigma_1=1$.
Given $\Sigma_n=\sigma_1\cdots\sigma_n$ and $X_{n+1}=k$, we obtain $\Sigma_{n+1}$ by appending $k$ to the end of $\Sigma_n$ and ``bumping'' all elements of $\Sigma_n$ that are greater or equal to $k$.
More formally, the permutation $\Sigma_n$ and the variable $X_{n+1}$ give rise to the updated permutation  $\Sigma'_1\cdots\Sigma'_nX_{n+1}$, where
\[\Sigma'_j=\left\{\begin{array}{cc} \Sigma_j+1,& \Sigma_j\geq X_{n+1},\\
\Sigma_j,& \text{otherwise.}
\end{array}\right.\]
For example, if $\Sigma_5=24135$ and $X_6=3$, then $\Sigma_{6}=251463$.
Again, the resulting distribution of $\Sigma_n$ is Mallows($q$) because $X_{n+1}=k$ introduces exactly $n+1-k$ new inversions in $\Sigma_{n+1}$ and $X_1,X_2,\ldots$ are generated independently.

\subsection{Properties of Mallows permutations}\label{section:Mallows-prop}

Throughout this section, we let $(\Sigma_n)_{n\geq1}$ be a family of random permutations so that each $\Sigma_n$ is a permutation of $[n]$.
We say that $(\Sigma_n)_{n\geq1}$ is {\em consistent} if for all $1\leq m\leq n$
\begin{equation}\label{eq:consistent}
P(\Sigma_{n|[m]}=\sigma)=P(\Sigma_m=\sigma),\quad\sigma\in\symmetricm.
\end{equation}
It is immediate from the ordering construction that the Mallows process $(\Sigma_n)_{n\geq1}$ is consistent for every $q>0$.

Recall the reduction map described in Section \ref{section:intro}.
We call $(\Sigma_n)_{n\geq1}$ {\em homogeneous} if for all $1\leq m\leq n$ and every subsequence $1\leq i_1<\cdots<i_m\leq n$
\begin{equation}\label{eq:homo}
P(\red(\Sigma_{n}(i_1)\cdots\Sigma_{n}(i_m))=\sigma)=P(\Sigma_m=\sigma),\quad\sigma\in\symmetricm.
\end{equation}
We call $(\Sigma_n)_{n\geq1}$ {\em consecutively homogeneous} if \eqref{eq:homo} holds only for consecutive subsequences $i_1, i_1+1,\ldots,i_1+m-1$.

\begin{lemma}\label{lemma:homo}
The Mallows($q$) process is consecutively homogeneous for all $q>0$ and homogeneous for $q=1$.
\end{lemma}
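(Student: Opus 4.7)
The plan is to establish both claims by exploiting the additive decomposition of the inversion count when positions are partitioned into consecutive blocks. Fix $1 \leq i$ with $i+m-1 \leq n$, and partition $[n]$ into three consecutive intervals: the prefix $P = \{1,\ldots,i-1\}$, the middle block $M = \{i,\ldots,i+m-1\}$, and the suffix $S = \{i+m,\ldots,n\}$. For a permutation $\sigma \in \symmetricn$, write $\sigma_P$, $\sigma_M$, $\sigma_S$ for the words obtained by reading $\sigma$ on these three intervals, and let $B_P, B_M, B_S$ be the corresponding sets of values. I would first verify the key identity
\[
|\inv(\sigma)| \;=\; |\inv(\sigma_P)| + |\inv(\sigma_M)| + |\inv(\sigma_S)| + C(B_P,B_M,B_S),
\]
where $C(B_P,B_M,B_S)$ counts all cross-block inversions. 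Because every element of $P$ precedes every element of $M$ (and likewise for $M$ and $S$), a cross-block pair $(p,q)$ with $p<q$ is an inversion precisely when $\sigma(p) > \sigma(q)$, and the number of such pairs depends only on the sets $B_P$, $B_M$, $B_S$ and not on the internal orderings; explicitly, the $P$--$M$ contribution is $\sum_{v\in B_P} |\{b\in B_M : b<v\}|$, and similarly for the other two cross terms.

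Using this identity in \eqref{eq:Mallows}, the joint law of $\sigma$ factorizes: conditionally on $(B_P,B_M,B_S)$, the three blocks $\sigma_P,\sigma_M,\sigma_S$ are independent, each distributed as Mallows$(q)$ on its underlying value set. I would then invoke the elementary observation that the reduction map $\mathrm{red}$ is an order-preserving bijection from any $m$-element subset of $[n]$ to $[m]$, and hence preserves the number of inversions of any word on that subset. Consequently, for every fixed realization of $B_M$, the word $\red(\sigma_M)$ has distribution Mallows$(q)$ on $[m]$; integrating over $B_M$ gives \eqref{eq:homo} for the consecutive subsequence $\{i,\ldots,i+m-1\}$, proving consecutive homogeneity.

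For full homogeneity at $q=1$, the uniform distribution on $\symmetricn$ is exchangeable in both positions and values, so for any $1\leq i_1<\cdots<i_m\leq n$ each of the $n!/(n-m)!$ length-$m$ words $(\Sigma_n(i_1),\ldots,\Sigma_n(i_m))$ is equally likely, and reducing yields the uniform distribution on $\symmetricm$. (Alternatively, condition on the set of values at positions $i_1,\ldots,i_m$: uniformly over $\binom{n}{m}$ choices, and uniformly distributed given the set.)

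The only genuine obstacle is the cross-inversion bookkeeping, and this is exactly where consecutivity is essential: for a non-consecutive index set, the sign of $\sigma(p)-\sigma(q)$ for $p\in M$, $q\notin M$ with $p<q$ depends on which particular positions of $M$ lie between prefix and suffix positions, so the cross-block contribution ceases to depend only on $(B_P,B_M,B_S)$ and the factorization above fails for $q\neq 1$. This is precisely what forces the two-part structure of the lemma and explains why general homogeneity must be restricted to the uniform case.
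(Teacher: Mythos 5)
Your proof is correct, but it takes a genuinely different route from the paper's. The paper proves consecutive homogeneity by combining its two sequential constructions: it builds the window via the ordering construction for Mallows($1/q$), passes to the reversal (using $P_n^q(\sigma)=P_n^{1/q}(\sigma^r)$) to place a Mallows($q$)-distributed block at positions $j,\ldots,m+j-1$, and then extends to $\Sigma_n$ by bumping, which leaves the relative order inside the window untouched. You instead argue directly from the Gibbs form \eqref{eq:Mallows}: for a consecutive block, $|\inv(\sigma)|$ splits into the within-block inversion counts plus a cross-block term depending only on the value sets, so conditionally on those sets the blocks are independent and each internal order is Mallows($q$) on its set; since reduction is order-preserving it preserves inversions, and integrating out the sets gives $P\{\red(\Sigma_n(i)\cdots\Sigma_n(i+m-1))=\tau\}=q^{|\inv(\tau)|}/I_m(q)$, with the normalization handled because summing $q^{\inv}$ over orderings of any $k$-element block yields $I_k(q)$. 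Your $q=1$ case via exchangeability matches the paper's (which simply cites it as well known), and your closing remark about why the cross-block count fails to depend only on the value sets for non-consecutive index sets correctly pinpoints where consecutivity enters. What each approach buys: yours is self-contained, needs no verification of the ordering/bumping constructions, and as a by-product exhibits the conditional factorization that also underlies weak dissociation (Lemma~\ref{lemma:dissociated}); the paper's is shorter given that the constructions are already set up in Section~\ref{section:Mallows} and reuses the reversal symmetry and bumping machinery that it exploits elsewhere.
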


\begin{proof}
The $q=1$ case corresponds to the uniform distribution, which is well known to be homogeneous.
For arbitrary $q>0$, consider the event $\{\red(\Sigma_{n}(j)\cdots\Sigma_{n}(j+m-1))=\sigma\}$ for some $\sigma\in\symmetricn$.
By the ordering construction, we can first generate $\Sigma_m=\Sigma_{m}(1)\cdots\Sigma_{m}(m)$ from the Mallows($1/q$) distribution on $[m]$.
We then obtain $\Sigma_{m+j-1}$ from $\Sigma_m$ using the bumping construction for Mallows($1/q$) distribution.
Thus, we have $\Sigma_{m+j-1}\sim\text{Mallows}(1/q)$ and its reversal $\Sigma_{m+j-1}^r\sim\text{Mallows}(q)$ with $\red(\Sigma^r_{m+j-1}(j)\cdots\Sigma^r_{m+j-1}(m+j-1))=\Sigma_m(m)\cdots\Sigma_m(1)\sim\text{Mallows}(q)$.
Finally, we obtain $\Sigma_n$ by augmenting $\Sigma_{m+j-1}^r$ according to the bumping construction, so that 
\[\red(\Sigma_{n}(j)\cdots\Sigma_{n}(m+j-1))=\red(\Sigma_{m+j-1}^r(j)\cdots\Sigma^r_{m+j-1}(m+j-1))=\Sigma_m(m)\cdots\Sigma_m(1)\sim\text{Mallows}(q).\]
This completes the proof.
\end{proof}

We say that $\Sigma_n$ is {\em dissociated} if $\Sigma_{n|A}$ and $\Sigma_{n|B}$ are independent for all non-overlapping subsets $A,B\subseteq[n]$.
If, instead, $\Sigma_{n|A}$ and $\Sigma_{n|B}$ are independent only when $A$ and $B$ are disjoint and each consists of consecutive indices, then we call $\Sigma_n$ {\em weakly dissociated}.

\begin{lemma}\label{lemma:dissociated}
For all $n\geq1$, the Mallows($q$) distribution on $\symmetricn$ is weakly dissociated for all $q>0$ and dissociated for $q=1$.
\end{lemma}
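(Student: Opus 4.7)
The plan is to split into two cases. For $q=1$ (full dissociation), a direct enumeration suffices; for general $q>0$ (weak dissociation), I will use an inversion decomposition tailored to consecutive value blocks to factor the joint density of $(\Sigma_{n|A},\Sigma_{n|B})$.

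For the uniform case, let $A,B\subseteq[n]$ be disjoint with $|A|=k$ and $|B|=l$, and fix permutations $\pi_A$ of $A$ and $\pi_B$ of $B$. The number of $\sigma\in\symmetricn$ with $\sigma_{|A}=\pi_A$ and $\sigma_{|B}=\pi_B$ is obtained by choosing the positions for the $A$-values (in the order prescribed by $\pi_A$), then for the $B$-values, then filling the rest freely:
\[
\binom{n}{k}\binom{n-k}{l}(n-k-l)!,
\]
so the joint probability equals $1/(k!\,l!)$, the product of the marginals, and independence follows.

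For general $q>0$ and disjoint consecutive value blocks $A,B\subseteq[n]$, assume WLOG $\max A<\min B$, so $[n]$ partitions into five (possibly empty) consecutive value blocks $C_1<C_2=A<C_3<C_4=B<C_5$. For $\sigma\in\symmetricn$ let $P_i$ denote the set of positions where $\sigma$ takes values in $C_i$. The key identity is
\[
|\inv(\sigma)|=\sum_{i=1}^{5}|\inv(\sigma_{|C_i})|+N(P_1,\ldots,P_5),
\]
where $N(P_1,\ldots,P_5):=\sum_{i<j}|\{(p,q):p<q,\ p\in P_j,\ q\in P_i\}|$ depends only on the position partition. This holds because every value in $C_i$ is strictly less than every value in $C_j$ for $i<j$, so any pair of positions $(p,q)$ with $p<q$ whose values straddle two different blocks is an inversion iff the larger-block value sits at~$p$.

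Substituting this into the Mallows density \eqref{eq:Mallows} and summing $P(\Sigma_n=\sigma)$ over all $\sigma$ with prescribed $\sigma_{|A}=\pi_A$ and $\sigma_{|B}=\pi_B$---varying $\sigma_{|C_1}, \sigma_{|C_3}, \sigma_{|C_5}$ and the partition $(P_i)_i$---the factor $q^{|\inv(\pi_A)|}q^{|\inv(\pi_B)|}$ pulls outside the sum, and the remainder depends only on $n$, $|A|$, $|B|$, and $q$. Hence $P(\Sigma_{n|A}=\pi_A,\Sigma_{n|B}=\pi_B)$ factors as a function of $\pi_A$ times a function of $\pi_B$, yielding the claimed independence. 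The main obstacle---and the step that fails when $A$ or $B$ is not a block of consecutive integers---is the inversion decomposition: without the consecutive hypothesis, cross-block inversions depend on the actual values rather than only on the position partition, and the factorization collapses.
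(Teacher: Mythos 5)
Your argument is correct, but it proves the lemma by a genuinely different route than the paper. The paper's proof is dynamic: it builds $\Sigma_n$ by running the ordering construction up to the end of the earlier window and then extending by the bumping construction, and it argues (via the consecutive homogeneity of Lemma \ref{lemma:homo} and the fact that bumping never changes the relative order of entries already in place) that the reduction of the earlier block is already determined before the later block is created, so conditioning on the later block's pattern cannot alter its law; the $q=1$ dissociated case is dismissed as well known. You instead argue statically, by factoring the Mallows mass function \eqref{eq:Mallows}: the decomposition $|\inv(\sigma)|=\sum_i|\inv(\sigma_{|C_i})|+N(P_1,\ldots,P_5)$, whose cross-block term depends only on the position partition precisely because the blocks are intervals, lets you sum out the three nuisance blocks and the position partition to exhibit the joint law of $(\Sigma_{n|A},\Sigma_{n|B})$ as $q^{|\inv(\pi_A)|}\,q^{|\inv(\pi_B)|}$ times a constant, whence independence. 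Your route buys a bit more: it shows each restricted block is itself Mallows-distributed, and it gives an honest elementary proof of the $q=1$ statement (your counting $\binom{n}{k}\binom{n-k}{l}(n-k-l)!/n!=1/(k!\,l!)$ is exactly right), which the paper omits; the paper's route is shorter because it recycles machinery already built for Lemma \ref{lemma:homo}.

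One caveat you should address explicitly: you prove independence for disjoint intervals of \emph{values}, which is the literal reading of the paper's definition of $\sigma_{|A}$ (restriction by removing values outside $A$), whereas the paper's own proof---and every downstream use of the lemma, where $X_\alpha$ is the pattern formed by the entries at consecutive \emph{positions} $i,\ldots,i+m-1$---concerns disjoint windows of positions. The two statements are equivalent, but not tautologically so: since $|\inv(\sigma)|=|\inv(\sigma^{-1})|$, the Mallows law is invariant under $\sigma\mapsto\sigma^{-1}$, and inversion exchanges the pattern of $\sigma$ on a set of positions with the inverse of the pattern of $\sigma^{-1}$ on that same set viewed as values; a bijective relabeling of patterns preserves independence. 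Adding that one sentence closes the loop, and it is the cleaner fix, because your factorization does not transfer directly to position windows (there, cross-window inversions depend on the actual values in each window, not just on a partition).
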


\begin{proof}
For $i'>i\geq1$ and $m,m'\geq0$ satisfying $i+m-1<i'$ and $i'+m'-1\leq n$, let $A=\{i,i+1,\ldots,i+m-1\}$ and $B=\{i',i'+1,\ldots,i'+m'-1\}$.
For any $n\geq1$, we can construct a Mallows($q$) permutation of $[n]$ by first generating $\Sigma_{i+m-1}$, for which we know that $\red(\Sigma_{i+m-1}(i)\cdots\Sigma_{i+m-1}(i+m-1))\sim\text{Mallows}(q)$ by Lemma \ref{lemma:homo}.
We then construct $\Sigma_n$ from $\Sigma_{i+m-1}$ by the bumping construction.
Since bumping does not affect the reduction of any part of $\Sigma_{n}(1)\cdots\Sigma_{n}(i+m-1)$, we have
\begin{eqnarray*}
\lefteqn{P(\red(\Sigma_{n}(i)\cdots\Sigma_{n}(i+m-1))=\sigma\mid\red(\Sigma_{n}(i')\cdots\Sigma_{n}(i'+m'-1))=\sigma'\}=)}\\
&=&
P(\red(\Sigma_{i+m-1}(i)\cdots\Sigma_{i+m-1}(i+m-1))=\sigma\mid\red(\Sigma_{n}(i')\cdots\Sigma_{n}(i'+m'-1))=\sigma')\\
&=&P(\Sigma_{m}(1)\cdots\Sigma_{m}(m)=\sigma),
\end{eqnarray*}
proving that $\Sigma_n$ is weakly dissociated.
Dissociation of the uniform distribution ($q=1$) is well known and so we omit its proof.
The proof is complete.
\end{proof}

Together, the above properties facilitate study of consecutive pattern avoidance for Mallows permutations with arbitrary $q>0$.
For example, the pattern $231$ has probability $q^2/(1+2q+2q^2+q^3)$ to occur in any stretch of three consecutive positions of a Mallows($q$) permutation.
Since there are $n-2$ consecutive patterns of length 3 in a permutation of $[n]$, the expected number of occurrences is $(n-2)q^2/(1+2q+2q^2+q^3)$.
For large $n$ and small $q$, this expected value behaves asymptotically as $nq^2$, so that taking $q\sim1/\sqrt{n}$ gives an expected number on the order of a constant.
 When $q$ is large, the expected number of occurrences behaves as $n q^{-1}$ for large $n$, and taking $q\sim n$ gives an expected number on the order of a constant.

\subsection{Poisson convergence theorems}

Theorems~\ref{main:mallows:theorem} and~\ref{cor:mallows} follow by combining the above properties of Mallows permutations with Theorem~\ref{Arratia}.
The calculations and resulting bounds for the general Mallows measure follow the same program as the uniform case, with the key distinction that we only consider {\em consecutive} patterns for the general Mallows distribution; see Section \ref{section:classical-Mallows}.  
Unlike the uniform setting, the bounds for the Mallows distribution depend non-trivially on the parameter $q$ and the structure of $\tau$.
It is more fruitful to illustrate this dependence with specific examples than to regurgitate the same proof for Mallows permutations.

\subsubsection{Monotonic patterns under Mallows distribution}\label{section:monotonic}
 
Consider the set of permutations that avoid the pattern 123.  There are no inversions, and the size of the pattern is $3$; thus, the probability of seeing this pattern in any given set of three consecutive indices of a Mallows($q$) permutation is $1/I_3(q)$.  
We also need to consider second moments, i.e., the probability of seeing two 123 patterns.  
By Lemma~\ref{lemma:dissociated} we need only consider overlapping sets of indices.
There are two cases, either two indices overlap or one does. 
If two indices overlap and the first three and last three both reduce to pattern 123, then the segment must reduce to 1234.
Similarly, if one index overlaps, then the segment must reduce to 12345.

The results below extend this argument to monotonic patterns. 

\begin{lemma}
Fix $q>0$ and let $\Sigma_n\sim\text{Mallows}(q)$.
For each $m \geq 1$, let $\tau_m$ denote the pattern $12\cdots m$.  
For each $\alpha \in \overline{J}_m$, define
\[ X_\alpha = \mathbb{I}(\red(\Sigma_{n|\alpha})=\tau_m). \]
(Recall that $\Sigma_{n|\alpha}$ denotes the restriction of $\Sigma_n$ to the subset of indices $\alpha$.)
For a random permutation generated using the Mallows measure, we have
\[ \e X_\alpha = \frac{1}{I_m(q)},  \qquad \alpha \in \overline{J}_m, \]
and for $\alpha, \beta \in \overline{J}_m$, $\alpha \neq \beta$, we have
\[ \e X_\alpha X_\beta = 
\begin{cases} 
	\frac{1}{I_m(q)^2}, & \text{if $\alpha, \beta$ have no overlapping elements} \\
	\frac{1}{I_{2m-s}(q)}, & \text{if $\alpha, \beta$ have exactly $s$ overlapping elements, $s = 1, 2, \ldots, m-1$.}
\end{cases}
\]
\end{lemma}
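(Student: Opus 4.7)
The plan is to prove both parts by reducing computations on the uniform-looking consecutive blocks to the Mallows distribution on smaller symmetric groups via the homogeneity and dissociation lemmas already established (Lemma~\ref{lemma:homo} and Lemma~\ref{lemma:dissociated}).

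For the first identity, I would invoke Lemma~\ref{lemma:homo}: since the Mallows$(q)$ process is consecutively homogeneous, the reduction $\red(\Sigma_{n|\alpha})$ of $\Sigma_n$ to any consecutive block $\alpha$ of size $m$ is itself distributed as Mallows$(q)$ on $\symmetricm$. The pattern $\tau_m=12\cdots m$ has $|\inv(\tau_m)|=0$, so by \eqref{eq:Mallows},
\[
\mathbb{E}X_\alpha \;=\; P\{\red(\Sigma_{n|\alpha})=12\cdots m\} \;=\; \frac{q^{0}}{I_m(q)} \;=\; \frac{1}{I_m(q)}.
\]

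For the second identity, I would split on whether the consecutive blocks $\alpha,\beta\in\overline{\Gamma}_m$ overlap. When $\alpha\cap\beta=\emptyset$, both are disjoint consecutive blocks, so Lemma~\ref{lemma:dissociated} (weak dissociation) gives independence of $\Sigma_{n|\alpha}$ and $\Sigma_{n|\beta}$; hence $\mathbb{E}X_\alpha X_\beta=\mathbb{E}X_\alpha\,\mathbb{E}X_\beta=1/I_m(q)^2$. When $\alpha$ and $\beta$ share exactly $s\in\{1,\ldots,m-1\}$ elements, the fact that both are consecutive blocks of the same size forces their union $\alpha\cup\beta$ to itself be a consecutive block of size $2m-s$.

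The key combinatorial step is the observation that if $\red(\Sigma_{n|\alpha})=\red(\Sigma_{n|\beta})=12\cdots m$, then $\red(\Sigma_{n|\alpha\cup\beta})=12\cdots(2m-s)$: the values at positions in $\alpha$ are increasing, the values at positions in $\beta$ are increasing, and the two sequences agree on the $s$ shared positions, so by transitivity every value in $\alpha\cup\beta$ is larger than every earlier value. Conversely, any such monotone reduction on $\alpha\cup\beta$ certainly restricts to monotone patterns on $\alpha$ and $\beta$, so the event $\{X_\alpha=1,X_\beta=1\}$ equals the event $\{\red(\Sigma_{n|\alpha\cup\beta})=12\cdots(2m-s)\}$. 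Applying consecutive homogeneity once more to the block $\alpha\cup\beta$ of size $2m-s$, this probability equals $1/I_{2m-s}(q)$.

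The only nontrivial point is the combinatorial rigidity observation above---that monotone patterns on overlapping consecutive blocks force a monotone pattern on the union---but this is immediate from transitivity of the order on a totally ordered set once the overlap is identified. Everything else is a direct invocation of the sequential-construction consequences already recorded in Lemmas~\ref{lemma:homo} and \ref{lemma:dissociated}.
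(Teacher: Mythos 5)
Your proof is correct and follows essentially the same route as the paper: weak dissociation (Lemma~\ref{lemma:dissociated}) for disjoint blocks, the observation that both blocks reducing to $12\cdots m$ is equivalent to the union block of size $2m-s$ reducing to the increasing pattern, and consecutive homogeneity (Lemma~\ref{lemma:homo}) to evaluate the resulting probabilities as $1/I_m(q)$ and $1/I_{2m-s}(q)$. If anything, you spell out more carefully than the paper the two points it leaves implicit, namely the equivalence (not just implication) of the overlapping-block events and the homogeneity step behind each probability evaluation.
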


\begin{proof}
The expression for $\mathbb{E}X_{\alpha}X_{\beta}$ when $\alpha$ and $\beta$ do not overlap is a consequence of the weak dissociation property of Mallows permutations (Lemma \ref{lemma:dissociated}), whereby
\[\e X_{\alpha}X_{\beta}=\e X_{\alpha} \e X_{\beta} = P(\red(\Sigma_{n|\alpha})=\tau_m)^2=(1/I_m(q))^2.\]
When $\alpha$ and $\beta$ overlap in $s$ elements, the event $\{X_{\alpha}=X_{\beta}=1\}$ requires that both $\Sigma_{n|\alpha}$ and $\Sigma_{n|\beta}$ reduce to the increasing permutation, which can occur only if $\Sigma_{n|\alpha\cup\beta}$ reduces to the increasing permutation of $2m-s$.
\end{proof}

\begin{prop}\label{prop:increasing}
Fix $q>0$ and let $\Sigma_n\sim\text{Mallows}(q)$.
For any $m \geq 2$, 
let $\tau_m$ be the increasing pattern $12\cdots m$.  
For any $\alpha \in \overline{J}_m$, let
\[ X_\alpha = \mathbb{I}(\red(\Sigma_{n|\alpha})=\tau_m). \]
Let $W = \sum_{\alpha \in \overline{J}_m} X_\alpha$ and let $Y$ be an independent Poisson random variable with expected value $\lambda = \e W$.  
Then
\[ d_{TV}(\mathcal{L}(W), \mathcal{L}(Y)) \leq 2(b_1+b_2), \]
where
\begin{align}
\notag \lambda&=\frac{n-m}{I_m(q)}, \qquad 
 b_1 = \frac{n_1}{I_m(q)^2}, \qquad 
 b_2  =n_2 \sum_{s=1}^{m-1}\frac{1}{I_{2m-s}(q)},
\end{align}
 and $n_1$ and $n_2$ are given by
 \begin{align}
\notag n_1 &= 2 m n- 3 m^2+m \quad\text{and}\\
\label{mono n2} n_2 &= 3 m - 3 m^2 - 2 n + 2 m n.
 \end{align}
\end{prop}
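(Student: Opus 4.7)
The plan is to apply the Arratia--Goldstein--Gordon theorem (Theorem~\ref{Arratia}) with index set $I = J_m$ and dependency neighborhood $D_\alpha = \{\beta \in J_m : \beta \cap \alpha \neq \emptyset\}$, which is the natural choice since $X_\alpha$ and $X_\beta$ are functions of restrictions $\Sigma_{n|\alpha}$ and $\Sigma_{n|\beta}$, which can interact only when the underlying consecutive windows overlap.

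First, I would quote the preceding lemma to record that $p_\alpha = \e X_\alpha = 1/I_m(q)$ for every $\alpha \in J_m$, so $\lambda = \e W = (n-m)/I_m(q)$ under the boundary convention used throughout the paper; the same lemma provides $p_{\alpha\beta} = \e X_\alpha X_\beta = 1/I_{2m-s}(q)$ whenever $\alpha$ and $\beta$ overlap in exactly $s \in \{1, \ldots, m-1\}$ indices. Plugging these into Theorem~\ref{Arratia} reduces $b_1$ and $b_2$ to counting problems: $b_1 = n_1/I_m(q)^2$, where $n_1$ counts ordered pairs $(\alpha, \beta) \in J_m \times J_m$ with $\beta \in D_\alpha$, and $b_2 = n_2 \sum_{s=1}^{m-1} 1/I_{2m-s}(q)$, where the total count of ordered pairs with $\alpha \neq \beta$ across all overlap sizes is regrouped as asserted. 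These counts are polynomial in $n$ and $m$ and match the stated expressions after careful boundary accounting.

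The crucial step is showing $b_3 = 0$. Fix any $\alpha = \{i, i+1, \ldots, i+m-1\}$; the indices $\beta \notin D_\alpha$ are exactly those consecutive $m$-subsets lying entirely in the left block $\{1, \ldots, i-1\}$ or the right block $\{i+m, \ldots, n\}$. Each such $X_\beta$ is measurable with respect to $\Sigma_{n|\beta}$, so the $\sigma$-algebra $\sigma(X_\beta : \beta \notin D_\alpha)$ is contained in $\sigma(\Sigma_{n|[n]\setminus\alpha})$. I would then argue, via the ordering/bumping construction of Definition~\ref{defn:Mallows process}, that $\Sigma_{n|\alpha}$ is independent of $\Sigma_{n|[n]\setminus \alpha}$: build the Mallows($q$) permutation on $\alpha$ first using $m-1$ independent truncated Geometric variables, and then insert the remaining positions via further independent truncated Geometric draws that do not alter the relative order within $\alpha$. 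This promotes the pairwise weak dissociation of Lemma~\ref{lemma:dissociated} to the joint independence needed to conclude that $\e[X_\alpha - p_\alpha \mid \sigma(X_\beta : \beta \notin D_\alpha)] = 0$ almost surely.

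The main obstacle will be precisely this last independence statement, since Lemma~\ref{lemma:dissociated} is stated only for pairs of disjoint consecutive blocks, whereas $b_3 = 0$ requires independence of $\Sigma_{n|\alpha}$ from the full $\sigma$-algebra generated by all disjoint consecutive $\beta$'s simultaneously (i.e., from $\Sigma_{n|B}$ where $B = [n]\setminus\alpha$ is a union of two consecutive blocks, not a single one). I expect this extension to follow cleanly from the sequential construction—one orders the insertions so that all elements of $\alpha$ are placed before any elements outside, exploiting the independence of the truncated Geometric inputs—but the argument should be written out explicitly. With $b_3 = 0$ in hand, substitution into Theorem~\ref{Arratia} yields the claimed bound $d_{TV}(\mathcal{L}(W), \mathcal{L}(Y)) \leq 2(b_1 + b_2)$.
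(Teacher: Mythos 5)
Your proposal is correct and follows essentially the paper's own route: apply Theorem~\ref{Arratia} with the first and second moments from the preceding lemma, argue $b_3=0$, and count overlapping ordered pairs of windows, which gives $n_1=2\sum_{s=1}^{m}(n-2m+s)=2mn-3m^2+m$ and $n_2=2\sum_{s=1}^{m-1}(n-2m+s)=2mn-3m^2-2n+3m$ --- this elementary count is the only step the paper's proof actually writes out, so it should be carried through rather than asserted as ``careful boundary accounting.'' Your observation that $b_3=0$ requires independence of $\red(\Sigma_{n|\alpha})$ from the \emph{joint} collection of disjoint windows, beyond the pairwise statement of Lemma~\ref{lemma:dissociated}, is well taken (the paper glosses this point), and your sequential-construction argument does close it; equivalently, one can split $|\inv(\sigma)|$ into within-block and between-block inversions to see that $q^{|\inv(\sigma)|}$ factorizes, so the patterns of disjoint consecutive blocks are mutually independent, which yields $\e[X_\alpha-p_\alpha\mid\sigma(X_\beta:\beta\notin D_\alpha)]=0$ directly.
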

\begin{proof}
When $2m-1 \leq n$, we have $n_1 = 2\sum_{s=1}^m (n-2m+s) = 2 m n- 3 m^2+m$, and similarly $n_2 = 2\sum_{s=1}^{m-1}(n-2m+s) = 3 m - 3 m^2 - 2 n + 2 m n.$  The factor of $2$ is from exchanging the role of $\alpha, \beta$.
When $2m-1 > n$, the stated expressions for $n_1$ and $n_2$ are still valid upper bounds, but they can be  improved.
\end{proof}

It is straightforward to state the complementary result about the decreasing pattern $m\cdots 21$.

\begin{prop}\label{prop:decreasing}
Fix $q>0$ and let $\Sigma_n\sim\text{Mallows}(q)$.
For any $m \geq 2$, 
let $\eta_m$ be the decreasing pattern $m\cdots 21$.
For any $\alpha \in J_m$, let
\[ X_\alpha = \mathbb{I}(\red(\Sigma_{n|\alpha})=\eta_m) \]
 and $W = \sum_{\alpha \in J_m} X_\alpha$, and allow $Y$ to denote an independent Poisson random variable with expected value $\lambda = \e W$.  
Then
\[ d_{TV}(\mathcal{L}(W), \mathcal{L}(Y)) \leq 2(b_1+b_2), \]
where
\begin{align*}
\lambda&= (n-m)\frac{q^{{m\choose2}}}{I_m(q)}, \qquad 
 b_1 = \frac{n_1\ q^{\binom{m}{2}}}{I_m(q)^2}, \qquad
 b_2 = n_2\ \sum_{s=1}^{m-1} \frac{q^{\binom{2m-s}{2}}}{I_{2m-s}(q)}, 
\end{align*}
 and $n_1$ and $n_2$ are given by
 \begin{align*}
\notag n_1 &= 2 m n- 3 m^2+m \quad\text{and}\\
\label{mono n2} n_2 &= 3 m - 3 m^2 - 2 n + 2 m n.
 \end{align*}
\end{prop}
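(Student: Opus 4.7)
\textbf{Proof plan for Proposition~\ref{prop:decreasing}.} The approach mirrors that of Proposition~\ref{prop:increasing}: apply Theorem~\ref{Arratia} (Arratia--Goldstein--Gordon) to the collection $(X_\alpha)_{\alpha \in \overline{\Gamma}_m}$ of consecutive pattern indicators, choosing the dependency neighborhoods $D_\alpha := \{\beta \in \overline{\Gamma}_m : \alpha \cap \beta \neq \emptyset\}$. Weak dissociation of the Mallows($q$) distribution (Lemma~\ref{lemma:dissociated}) guarantees that $X_\alpha$ is independent of $\{X_\beta : \beta \notin D_\alpha\}$, which forces $b_3 = 0$. So the only task is to compute $b_1$ and $b_2$ by evaluating first and second moments and applying the combinatorial counts $n_1, n_2$ used in the increasing case.

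The first moment is immediate from consecutive homogeneity (Lemma~\ref{lemma:homo}): for each $\alpha \in \overline{\Gamma}_m$,
\[
\e X_\alpha = P\{\Sigma_m = \eta_m\} = \frac{q^{|\inv(\eta_m)|}}{I_m(q)} = \frac{q^{\binom{m}{2}}}{I_m(q)},
\]
since the decreasing pattern achieves the maximum number $\binom{m}{2}$ of inversions. Summing over the $n - m + 1$ consecutive windows yields the claimed $\lambda$, up to the standard simplification to $(n-m) q^{\binom{m}{2}}/I_m(q)$ already used in Theorem~\ref{main:mallows:theorem}. For the second moments with $\alpha \neq \beta$ overlapping in exactly $s \geq 1$ consecutive indices, the event $\{X_\alpha = X_\beta = 1\}$ forces $\Sigma_{n|\alpha \cup \beta}$ to reduce to the decreasing pattern of length $2m - s$; applying consecutive homogeneity again gives
\[
\e X_\alpha X_\beta = \frac{q^{\binom{2m-s}{2}}}{I_{2m-s}(q)}.
\]

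To assemble $b_1$, I sum $p_\alpha p_\beta$ over $(\alpha, \beta)$ with $\alpha \in \overline{\Gamma}_m$ and $\beta \in D_\alpha$; the number of such ordered overlapping pairs (including $\alpha = \beta$) is exactly $n_1 = 2mn - 3m^2 + m$, which is a purely combinatorial count of consecutive $m$-windows in $[n]$ and is identical to the increasing case. This gives $b_1 = n_1 q^{\binom{m}{2}}/I_m(q)^2$. To assemble $b_2$, I sum $p_{\alpha\beta}$ over ordered overlapping pairs with $\alpha \neq \beta$; stratifying by the overlap size $s \in \{1, \ldots, m-1\}$ and using the identity $n_2 = 2mn - 2n - 3m^2 + 3m$ for the number of such pairs at each $s$ (again independent of the pattern), I obtain $b_2 = n_2 \sum_{s=1}^{m-1} q^{\binom{2m-s}{2}}/I_{2m-s}(q)$. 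Plugging $b_1, b_2, b_3 = 0$ into the bound $d_{TV}(\mathcal{L}(W), \mathcal{L}(Y)) \leq 2(b_1 + b_2 + b_3)$ from Theorem~\ref{Arratia} completes the proof.

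There is no genuine obstacle here: the proposition is a direct mirror image of Proposition~\ref{prop:increasing}, and the only substantive change is replacing the $0$-inversion weight of the increasing pattern with the $\binom{m}{2}$-inversion weight of the decreasing pattern at every length. As a sanity check, one can verify the result via the reversal symmetry $P_n^q(\sigma) = P_n^{1/q}(\sigma^r)$: the identity $I_m(1/q) = q^{-\binom{m}{2}} I_m(q)$ shows that the increasing-pattern probability under Mallows($1/q$) equals $q^{\binom{m}{2}}/I_m(q)$, exactly matching the direct computation above. The only mild care required is to recognize that the combinatorial constants $n_1$ and $n_2$ depend only on $n$ and $m$, not on the pattern, so they carry over verbatim from the increasing case.
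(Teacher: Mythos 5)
Your plan is essentially the paper's own route: the paper offers no separate argument for Proposition~\ref{prop:decreasing}, presenting it as the mirror image of Proposition~\ref{prop:increasing}, whose proof counts overlapping pairs of consecutive windows (giving $n_1$ and $n_2$), kills $b_3$ by weak dissociation (Lemma~\ref{lemma:dissociated}), and feeds the moments into Theorem~\ref{Arratia}. Your moments $\e X_\alpha = q^{\binom{m}{2}}/I_m(q)$ and $\e X_\alpha X_\beta = q^{\binom{2m-s}{2}}/I_{2m-s}(q)$ for overlap of size $s$ are exactly the decreasing-pattern analogues of the lemma preceding Proposition~\ref{prop:increasing}, and the combinatorial constants $n_1$, $n_2$ indeed carry over unchanged.

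One arithmetic step does not hold as written: summing $p_\alpha p_\beta$ over the $n_1$ ordered overlapping pairs gives $n_1\, q^{2\binom{m}{2}}/I_m(q)^2$, not $n_1\, q^{\binom{m}{2}}/I_m(q)^2$, since each factor contributes $q^{\binom{m}{2}}/I_m(q)$. The doubled exponent is also what Theorem~\ref{main:mallows:theorem} records ($q^{2|\inv(\tau_m)|}$ with $|\inv(\eta_m)|=\binom{m}{2}$), so your method, carried out correctly, yields the sharper and internally consistent constant; the single power of $q^{\binom{m}{2}}$ that you assert (matching the proposition's display) upper-bounds it only when $q\le 1$, the regime to which the paper reduces by the reversal symmetry you cite. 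A smaller point: $n_2=2\sum_{s=1}^{m-1}(n-2m+s)$ is the \emph{total} number of ordered overlapping pairs with $\alpha\neq\beta$, not the count at each fixed $s$ (that count is $2(n-2m+s)$), so $b_2=n_2\sum_s q^{\binom{2m-s}{2}}/I_{2m-s}(q)$ over-counts the Arratia--Goldstein--Gordon $b_2$; this is harmless for an upper bound and is exactly what the paper does, but your justification should state it as an over-count rather than describe $n_2$ as the per-$s$ count.
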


\subsubsection{Other patterns of length 3}\label{section:3}
We now demonstrate the dependence of the total variation bound on $q$ for the small patterns 132, 213, 231, and 312.
 In this section, we again recall that $\overline{J}_m$ denotes the set of all $m$-tuples with consecutive elements in $\{1,2,\ldots, n\}$, and for a given $\alpha \in \overline{J}_3$, $X_\alpha$  denotes the indicator random variable defined in \eqref{eq:indicator}.

For $\tau=132$, we have $\e X_\alpha = {q}/{I_3(q)}$, and there can be no consecutive occurrences of $\tau$ that overlap with two indices.  The only possible ways to have one overlapping index are the patterns 13254, 15243, and 14253.  In these cases, we have
\[ \lambda  = \frac{(n-3)\, q}{I_3(q)}\quad\text{and} \]
\[ 
\e X_\alpha X_\beta = \frac{1}{I_5(q)}\times \begin{cases}
q^2, & 13254, \\
q^3, & 14253, \\
q^4, & 15243.
\end{cases}
\]
Letting $W = \sum_{\alpha \in \overline{J}_3} \e X_\alpha$  and defining $Y$ as an independent Poisson random variable with expectation $\lambda = \e W = {(n-3)\, q}/{I_3(q)}$, the total variation distance bound is given by
\[ d_{TV}(\mathcal{L}(W), \mathcal{L}(Y)) \leq 2\left( (3n-13)\, \frac{q^2}{I_3(q)^2} + 2(n-5)\, \frac{q^2+q^3+q^4}{I_5(q)}\right). 
\]
The $2(n-5)$ term comes from the two sets of triplets $\{1,2,3\}$ and $\{2,3,4\}$ for which the overlapping pair can only occur to the right of the elements, and similarly from the two sets of triplets $\{n-2, n-1, n\}$ and $\{n-3, n-2, n-1\}$ for which the overlapping pair can only occur to the left of the elements, and finally the $(n-4-3)$ triplets in between for which the overlapping pairs are both to the left and the right; hence $2+2(n-7)+2 = 2(n-5)$. 
Similarly, the $3n-13$ comes from $2\cdot2 + 3(n-7) + 2\cdot 2$.  
For fixed $t>0$, we have $\lambda\to t$ and $d_{TV}(\mathcal{L}(W), \mathcal{L}(Y)) = O(n^{-1})$, provided $q \sim t\, n^{-1}$ or $q \sim t\, n^{1/2}$.  

For $\tau=213$, we similarly have 
\begin{align*}
\lambda & = \frac{(n-3)\, q}{I_3(q)}, \\
\e X_\alpha X_\beta & =
\frac{1}{I_5(q)}\times \begin{cases}
{q^2}, & 21435, \\
{q^3}, & 31425, \\
{q^4}, & 32415,
\end{cases}\quad \text{and} \\
d_{TV}(\mathcal{L}(W), \mathcal{L}(Y)) & \leq 2\left( (3n-13)\, \frac{q^2}{I_3(q)^2} + 2(n-5)\, \frac{q^2+q^3+q^4}{I_5(q)}\right),
\end{align*}
which for $q \sim t\,n^{-1}$ or $q \sim t\,n^{1/2}$ implies $\lambda \to t$ and $d_{TV}(\mathcal{L}(W), \mathcal{L}(Y)) = O(n^{-1}).$  

For $\tau= 231$:
\begin{align*}
\lambda & = \frac{(n-3)\, q^2}{I_3(q)}, \\
\e X_\alpha X_\beta & = \frac{1}{I_5(q)}\times\begin{cases}
{q^6}, & 34251, \\
{q^7}, & 35241, \\
{q^8}, & 45231,
\end{cases} \quad\text{and}\\
d_{TV}(\mathcal{L}(W), \mathcal{L}(Y)) & \leq 2\left( (3n-13)\, \frac{q^4}{I_3(q)^2} + 2(n-5)\, \frac{q^6+q^7+q^8}{I_5(q)}\right), 
\end{align*}
which for $q \sim t^{1/2} n^{-1/2}$ or $q \sim n\, t^{1/2}$ implies $\lambda \to t$ and $d_{TV}(\mathcal{L}(W), \mathcal{L}(Y)) = O(n^{-1}).$  

And finally for $\tau= 312$:
\begin{align*}
\lambda & = \frac{(n-3)\, q^2}{I_3(q)}, \\
\e X_\alpha X_\beta & = \frac{1}{I_5(q)}\times\begin{cases}
{q^6}, & 51423, \\
{q^7}, & 52413, \\
{q^8}, & 53412,
\end{cases}\quad\text{and} \\
d_{TV}(\mathcal{L}(W), \mathcal{L}(Y)) & \leq 2\left( (3n-13)\, \frac{q^4}{I_3(q)^2} + 2(n-5)\, \frac{q^6+q^7+q^8}{I_5(q)}\right), 
\end{align*}
which for $q \sim t^{1/2}\, n^{-1/2}$ or $q \sim n\,t^{1/2}$ implies $\lambda \to t$ and $d_{TV}(\mathcal{L}(W), \mathcal{L}(Y)) = O(n^{-1}).$

\section{Numerical examples}\label{section:cases}

\subsection{Numerical values}

Using Theorem~\ref{theorem:main}, we can estimate $|\symmetricn(\tau)|$ for different sizes of patterns $\tau$.  Table~\ref{table:bounds} shows the lower bound thresholds for several values of $n$ and patterns of size $j$.  
Similarly, using Theorem~\ref{consecutive patterns}, we estimate $|\symmetricntilde(\tau)|$ in Table~\ref{table:consecutive:bounds}.
In the case of $n = 1000$ and $j = 133$, we have more specifically $3.4433\times 10^{2567} \leq |\symmetricn(\tau)| \leq 4.0239\times 10^{2567}$.

\begin{table}[t]
\begin{tabular}{llll}
$n$ & $j$ & lower & $n!$ \\\hline
 100 & 36 & $6.85456\times 10^{157}$ & $9.3326\times 10^{157}$ \\
 1000 & 133 & $3.4433\times 10^{2567}$ & $4.6045\times 10^{2567}$ \\
 10000 & 442 & $8.3847\times 10^{35658}$ & $2.8463\times 10^{35659}$ \\
 100000 & 14353 & $9.9451\times 10^{65657058}$ & $1.2024\times 10^{65657059}$
\end{tabular}
\caption{Bounds on $|\symmetricn(\tau)|$ for $\tau \in \symmetricj$, for different values of $n$ and $j$.}
\label{table:bounds}
\end{table}

\begin{table}[t]
\begin{tabular}{llll}
$n$ & $j$ & lower & $n!$ \\\hline
100      & 6  & $3.98735\times 10^{157}$ & $9.33262\times 10^{157}$ \\
1000    & 7  & $5.77948\times 10^{2566}$ & $4.02387\times 10^{2567}$ \\
10000  & 9  & $2.49966\times 10^{35659}$ & $2.84626\times 10^{35659}$ \\
100000 & 10 & $2.48004\times 10^{456573}$ &$2.82423\times 10^{456573}$ \\
1000000 &11 & $7.34802\times 10^{5565708}$ &$8.26393\times 10^{5565708}$
\end{tabular}
\caption{ Bounds on $|\symmetricntilde(\tau)|$ for $\tau \in \symmetricj$, for different values of $n$ and $j$.}
\label{table:consecutive:bounds}
\end{table}

\subsection{Detailed illustration for the patterns 2341 and 23451}\label{section:detailed}

Propositions \ref{prop:increasing} and \ref{prop:decreasing} give an expression for the total variation bound between the number of occurrences of the increasing and decreasing patterns and an independent Poisson random variable.
In principle, these bounds can be computed exactly for any pattern by way of the Arratia--Goldstein--Gordon theorem (Theorem \ref{Arratia}), and so we need only compute the quantities $b_1$, $b_2$, and $b_3$ as in Theorem \ref{Arratia}.

By Lemma \ref{lemma:dissociated}, all Mallows($q$) permutations are weakly dissociated and, therefore, $b_3\equiv0$ for all patterns in the case of consecutive pattern avoidance.
For any pattern $\tau$, homogeneity of the Mallows measure implies $p_{\alpha}=q^{|\inv(\tau)|}/I_m(q)$ for all $\alpha,$ and so $b_1$ is easy to compute.
The only complication involves the consideration of overlapping patterns in the calculation of $b_2$.
We cannot prove anything more general than Arratia--Goldstein--Gordon for arbitrary patterns; instead, we compute these bounds in the special cases of $\tau=2341$ and $\tau=23451$.
Figure \ref{figure:plots} shows the performance of these bounds at the critical values $q\sim n^{-1/3}$ and $q\sim n^{1/3}$ for $\tau=2341,$ and $q\sim n^{-1/4}$ and $q\sim n^{1/6}$ for $\tau=23451$.

\begin{figure}[t]
	\centering
		\includegraphics[width=0.8\textwidth]{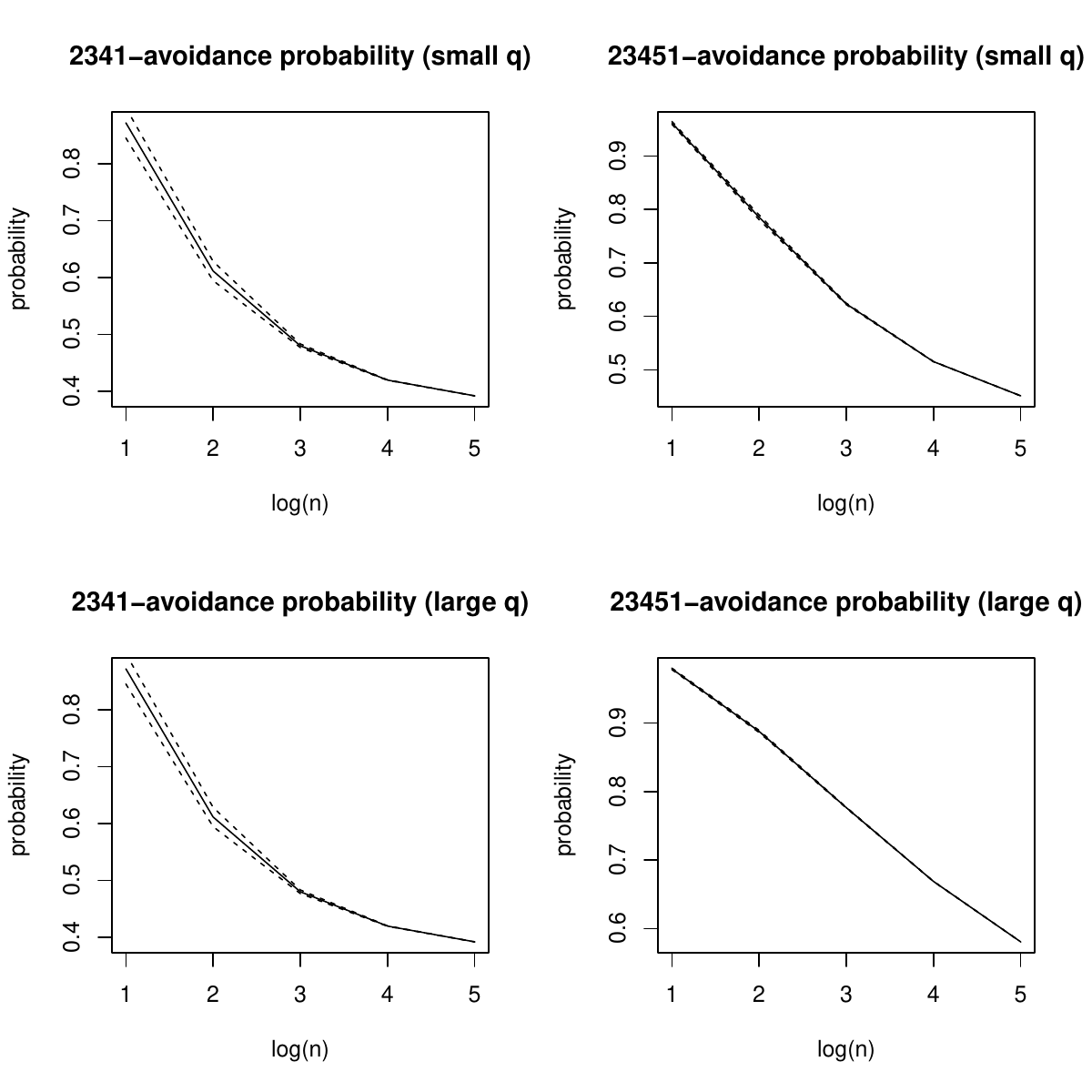}
	\caption{ Plot of pattern avoidance probabilities of Mallows($q$) distribution for: (top left)  pattern $\tau=2341$ with $q=n^{-1/3}$; (top right) pattern $\tau=23451$ with $q=n^{-1/4}$; (bottom left) pattern $\tau=2341$ with $q=n^{1/3}$; and (bottom right) pattern $\tau=23451$ with $q=n^{1/6}$.
The dashed lines represent the upper and lower error bounds from the Arratia--Goldstein--Gordon theorem, and the solid line represent their average, i.e., the heuristic approximation.
In all panels, the horizontal axis is on the logarithmic scale with base 10.  Note that the dashed lines overlap with the solid line in the two figures on the right, indicating that the error bounds are tightly concentrated about the estimated probability.}
	\label{figure:plots}
\end{figure}

\begin{figure}[t]
\includegraphics[width=0.4\textwidth]{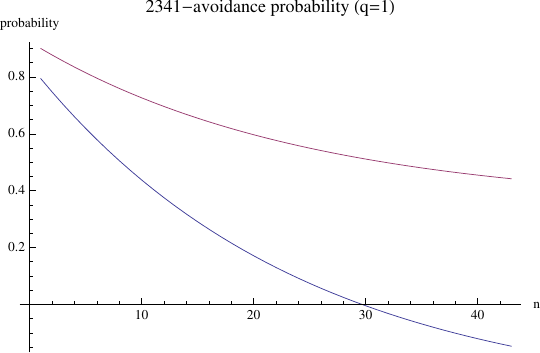}
\includegraphics[width=0.4\textwidth]{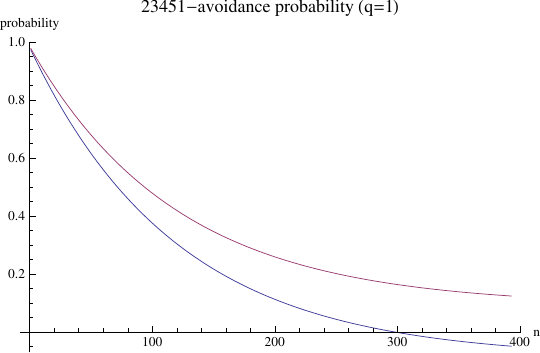}
\caption{ Plot of lower and upper bounds on pattern avoidance probabilities of uniform distribution ($q=1$) for  $\tau=2341$ (left) and $\tau=23451$ (right).}
	\label{figure:plots2}
\end{figure}

\subsubsection{The pattern $2341$}

\begin{table}[t]
\begin{tabular}{c|c||c|c}
permutation & no.\ inversions & permutation & no.\ inversions \\\hline
3452671 & 9 & 3462571 & 10 \\
3472561 & 11 & 3562471 & 11 \\
3572461 & 12 & 4562371 & 12\\
4572361 & 13 & 3672451 & 13\\
4672351 & 14 & 5672341 & 15
\end{tabular}
\caption{List of all permutations that have pattern 2341 in overlapping positions along with the number of inversions.}\label{2341}
\end{table}
For $\tau=2341$, we have $p_{\alpha}=q^{3}/I_4(q)$ and $b_1=(n-4)q^3/I_4(q)$.
The structure of $\tau$ only permits overlap with the first or last position.
Table \ref{2341} lists all permutations that have pattern 2341 in the first 4 and last 4 positions.  These are the only permutations that contribute to $b_2$ in the bound of Theorem \ref{Arratia}.
We assume $n\geq7$.
For positions $5,\ldots,n-5$, each of these overlapping patterns can occur twice; otherwise, the patterns occur only once for a total of $2(n-8)+8 = 2n-8$ possibilities.
There are $6(n-8)+2(5+4+3)=6n-24$ overlapping patterns $\alpha$ and $\beta$ that contribute to $b_1$.
Thus,
\begin{align*}
\lambda&=(n-4)q^3/I_4(q),\\
b_1&=(6n-24)q^{6}/I_4(q)^2,\quad\text{and}\\
b_2&=(2n-8)q^{9}(1+q+2q^{2}+2q^{3}+2q^{4}+q^{5}+q^{6})/I_7(q),
\end{align*}
giving the bounds
\[e^{-\lambda}-(b_1+b_2)\frac{1-e^{-\lambda}}{\lambda}\leq P(W=0)\leq e^{-\lambda} + (b_1+b_2)\frac{1-e^{-\lambda}}{\lambda}.\]

\subsubsection{The pattern $23451$}

\begin{table}[t]
\begin{tabular}{c|c||c|c||c|c}
permutation & no.\ inversions & permutation & no.\ inversions & permutation & no.\ inversions\\\hline
345627891 & 12 & 347825691 & 16 & 456923781 & 18\\
345726891 & 13 & 347925681 & 17 & 467823591 & 19 \\
345826791 & 14 & 348925671 & 18 & 467923581 & 20 \\
345926781 & 15 & 357824691 & 17 &468923571 & 21 \\
346725891 & 14 & 357924681 & 18 & 567823491 & 20\\
346825791 & 15 & 358924671 & 19 & 567923481 & 21\\
346924781 & 16 & 367824591 & 18 & 568923471 & 22 \\
356724891 & 15 & 367924581 & 19 & 378924561 & 21 \\
356824781 & 16 & 368924571 & 20 &  478923561 & 22\\
356924781 & 17 & 457823691 & 18 & 578923461 & 23 \\
456723891 & 16 & 457923681 & 19 & 678923451 & 24\\
456823791 & 17 & 458923671 & 20 &  &\\ 
\end{tabular}
\caption{List of all permutations that have pattern 23451 in overlapping positions along with the number of inversions.}
\label{23451}
\end{table}

For $\tau=23451$, we have $p_{\alpha}=q^{4}/I_5(q)$ and $b_1=(n-5)q^4/I_5(q)$.
The structure of $\tau$ only permits overlap with the first or last position.
Table \ref{23451} lists all permutations that have pattern 23451 in the first 5 and last 5 positions.  These are the only permutations that contribute to $b_2$ in the bound of Theorem \ref{Arratia}.
We assume $n\geq9$.
For positions $5,\ldots,n-5$, each of these overlapping patterns can occur twice; otherwise, the patterns occur only once for a total of $2(n-10)+10 = 2n-10$ possibilities.
There are $8(n-10)+2(7+6+5+4)=8n-36$ overlapping patterns $\alpha$ and $\beta$ that contribute to $b_1$.
Thus,
\begin{align*}
\lambda&=(n-5)q^4/I_5(q),\\
b_1&=(8n-36)q^{8}/I_5(q)^2,\quad\text{and}\\
b_2&=(2n-10)q^{12}(1+q+2q^{2}+3q^{3}+4q^{4}+4q^{5}+5q^{6}+4q^{7}+4q^{8}+3q^{9}+2q^{10}+q^{11}+q^{12})/I_9(q),
\end{align*}
producing the bounds
\[e^{-\lambda}-(b_1+b_2)\frac{1-e^{-\lambda}}{\lambda}\leq P(W=0)\leq e^{-\lambda} + (b_1+b_2)\frac{1-e^{-\lambda}}{\lambda}.\]

\section{Proofs}\label{section:proofs}

\subsection{Bounds on $p_\alpha$ and $p_{\alpha\beta}$}
We first prove several lemmas, from which the theorems follow.
By the homogeneity property of uniform permutations  we have
\[p_\alpha = \e X_\alpha = {1}/{m!}\quad\text{for all }\alpha\in J_m.\] 
To calculate the Poisson rate $\lambda$, we use linearity of expectation:  
there are $\binom{n}{m}$ possible $m$-tuples of elements in $[n]$, and so the expected number of subsets of $m$ elements that reduce to the pattern $k_1k_2\cdots k_m$ is
\[ \lambda = | J_m|\, p_\alpha = \binom{n}{m}/m!.\]

Next, we consider the joint expectation $p_{\alpha\beta} = \e X_\alpha X_\beta.$  
In these calculations, recall the notation for $d_1, d_2$ and $\overline{d}_1, \overline{d}_2$ from Theorems \ref{theorem:asymptotic-general} and \ref{theorem:asymptotic-consecutive}, respectively.   
\begin{lemma}\label{lemma:pab}
Fix $\alpha,\beta\in J_m$ and let $s=1,\ldots, m-1$ denote the number of elements that $\alpha, \beta$ have in common.
 For any such pair $\alpha, \beta$, we have
\begin{equation}\label{pab}
 p_{\alpha\beta} \le \frac{s!}{m!^2}. 
\end{equation}
\end{lemma}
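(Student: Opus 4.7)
The plan is to decompose $p_{\alpha\beta}$ by conditioning on the reduction of $\Sigma_n$ at the overlap $\gamma := \alpha \cap \beta$. Because homogeneity of the uniform law (Lemma~\ref{lemma:homo} with $q=1$) makes $p_{\alpha\beta}$ depend only on the reduction of $\Sigma_n|_{\alpha\cup\beta}$, I may work with a uniform permutation $\pi$ of $[2j-s]$. Let $I,J\subset[j]$ denote the positions of $\gamma$ inside $\alpha$ and $\beta$ respectively, set $\rho_I:=\red(\tau(i_1)\cdots\tau(i_s))$ and $\rho_J:=\red(\tau(j_1)\cdots\tau(j_s))$, and observe that $X_\alpha X_\beta=1$ forces $\red(\pi|_\gamma)$ to equal both $\rho_I$ and $\rho_J$. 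If $\rho_I\ne\rho_J$ then $p_{\alpha\beta}=0$ and the bound is immediate, so I may assume $\rho_I=\rho_J=:\rho$ and introduce the event $F_\gamma:=\{\red(\pi|_\gamma)=\rho\}$.

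Two ingredients then follow from homogeneity and direct counting. First, $P(F_\gamma)=1/s!$. Second, conditional on $F_\gamma$ the law of $\sigma_\alpha:=\red(\pi|_\alpha)$ is uniform on the $j!/s!$ permutations of $[j]$ whose restriction to $I$ reduces to $\rho$: given $F_\gamma$, the $j$-subset of values assigned to $\alpha$ is free, and the arrangement at $\alpha\setminus\gamma$ is any of $(j-s)!$ orderings, producing $\binom{j}{s}(j-s)!=j!/s!$ equally likely reductions of $\pi|_\alpha$. Exactly one of these is $\tau$, whence $P(X_\alpha=1\mid F_\gamma)=s!/j!$, and symmetrically $P(X_\beta=1\mid F_\gamma)=s!/j!$.

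The substantive step, which I expect to be the main obstacle, is the conditional correlation inequality
\[
P(X_\alpha=1,\,X_\beta=1\mid F_\gamma)\;\le\;P(X_\alpha=1\mid F_\gamma)\,P(X_\beta=1\mid F_\gamma)\;=\;\frac{(s!)^2}{(j!)^2}.
\]
My strategy is to condition further on the set of values $U:=\pi(\gamma)$, which is uniform on the $s$-subsets of $[2j-s]$ under $F_\gamma$. Given $U$, the values at $\alpha\setminus\gamma$ and at $\beta\setminus\gamma$ form a partition of $[2j-s]\setminus U$; the event $\{X_\alpha=1\}$ prescribes how many of these values fall into each of the $s+1$ gaps cut out by $U$ (a composition of $j-s$ determined by $\tau$ and $I$), while $\{X_\beta=1\}$ prescribes an analogous composition for the complementary values in terms of $\tau$ and $J$. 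Writing the joint conditional probability as a product of binomial coefficients over these gaps and then averaging over $U$ yields the required inequality. Combining the three pieces gives
\[
p_{\alpha\beta}\;=\;P(F_\gamma)\,P(X_\alpha=1,\,X_\beta=1\mid F_\gamma)\;\le\;\frac{1}{s!}\cdot\frac{(s!)^2}{(j!)^2}\;=\;\frac{s!}{(j!)^2},
\]
as claimed.
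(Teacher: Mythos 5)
You correctly reduce to a uniform permutation of the $2j-s$ relevant positions, and your two marginal computations are right ($P(F_\gamma)=1/s!$ and $P(X_\alpha=1\mid F_\gamma)=s!/j!$). But the step you yourself flag as substantive---the conditional negative-correlation inequality $P(X_\alpha=1,X_\beta=1\mid F_\gamma)\le P(X_\alpha=1\mid F_\gamma)\,P(X_\beta=1\mid F_\gamma)$---is only sketched, and it is false in general, so the argument does not go through. Take $j=3$, $\tau=123$, $\alpha=\{1,4,5\}$, $\beta=\{2,4,5\}$, so $s=2$, $\gamma=\{4,5\}$, $F_\gamma=\{\Sigma_n(4)<\Sigma_n(5)\}$. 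Then $P(X_\alpha=1,X_\beta=1)=P\bigl(\max(\Sigma_n(1),\Sigma_n(2))<\Sigma_n(4)<\Sigma_n(5)\bigr)=2/4!=1/12$, hence $P(X_\alpha=1,X_\beta=1\mid F_\gamma)=1/6$, while $P(X_\alpha=1\mid F_\gamma)P(X_\beta=1\mid F_\gamma)=(2/6)^2=1/9<1/6$: given the overlap pattern, the two occurrences are \emph{positively} correlated. This is exactly where your ``average over $U=\pi(\gamma)$'' plan breaks: both indicators depend monotonically in the same direction on the overlap value set $U$ (in the example both events want the two overlap values to be the two largest), so averaging the product of $U$-conditional probabilities gives an inequality in the wrong direction.

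Note, moreover, that the same example gives $p_{\alpha\beta}=1/12>1/18=s!/j!^2$, so the bound you are trying to prove fails outright for pairs whose shared indices occupy the same extremal slots in both copies of $\tau$; no completion of your scheme can establish it for arbitrary $\alpha,\beta\in\Gamma_j$. For comparison, the paper's own proof conditions directly on $X_\alpha=1$ and asserts that the remaining $j-s$ entries of $\beta$ then land in their proper relative order with probability $((s+1)(s+2)\cdots j)^{-1}=s!/j!$; in the example above that conditional probability is $1/2>1/3$, because conditioning on $X_\alpha=1$ supplies additional anchors (the values at $\alpha\setminus\beta$) beyond the relative order of the $s$ shared values. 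So your route (condition on the overlap pattern, then claim negative association) is genuinely different from the paper's (condition on all of $X_\alpha=1$, then an insertion-probability count), but both founder on the same phenomenon: the interaction of the two occurrences through the shared positions can push $p_{\alpha\beta}$ above $s!/j!^2$, and a correct statement must either restrict the admissible overlap configurations or weaken (e.g.\ average) the bound.
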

\begin{proof}
First we condition on $X_\alpha$, which contributes a factor of $1/m!$.  
By conditioning on $X_{\alpha}$, we assume that the $s$ common elements are in their proper order with respect to $X_\beta$.  It may so happen that, conditional on $X_\alpha$, no such event can occur, which justifies the inequality. 

Consider first $s = m-1$, i.e., condition on $m-1$ of the entries being in their proper order.  
Assuming that it is possible to realize both events simultaneously, the remaining element has probability $1/m$ of appearing in its correct order. 
For general $s$, conditional on $s$ entries being in their proper order, the probability that the remaining $m-s$ elements appear in their proper order is then $((s+1) (s+2)\cdots m)^{-1}$.  
\end{proof}

\subsection{Proof of Theorem~\ref{theorem:main}}
We have the following lemma.

\begin{lemma}
For $D_{\alpha}$ defined as in Section~\ref{section:definitions} and $b_3$ as in Theorem~\ref{Arratia}, we have $ b_3 = 0$ for all patterns $\tau$.
\end{lemma}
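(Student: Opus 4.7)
The plan is to prove that each summand in the definition of $b_3$ vanishes, i.e., that for every $\alpha\in\Gamma_j$,
\[
\mathbb{E}\bigl[X_\alpha - p_\alpha \,\bigm|\, \sigma(X_\beta : \beta\notin D_\alpha)\bigr] = 0 \quad\text{a.s.}
\]
Since $\beta\notin D_\alpha$ is equivalent to $\beta\cap\alpha = \emptyset$, each such $X_\beta$ is a deterministic function of the values $(\Sigma_n(i))_{i\in\alpha^c}$. Consequently, the conditioning $\sigma$-algebra is contained in $\mathcal{F}_\alpha := \sigma\bigl((\Sigma_n(i))_{i\in\alpha^c}\bigr)$, so by the tower property it suffices to establish that $\mathbb{E}[X_\alpha\mid\mathcal{F}_\alpha]=p_\alpha$; equivalently, that $X_\alpha$ is independent of $(\Sigma_n(i))_{i\in\alpha^c}$.

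To obtain this independence, I would condition on the random set $V := \{\Sigma_n(i) : i\in\alpha\}$ of values that $\Sigma_n$ assigns to positions in $\alpha$. Note that $V$ is measurable with respect to $\mathcal{F}_\alpha$, since $V = [n]\setminus\{\Sigma_n(i):i\in\alpha^c\}$. Because $\Sigma_n$ is uniform on $\symmetricn$, conditional on $V$ the restriction $(\Sigma_n(i))_{i\in\alpha}$ is a uniformly random bijection from $\alpha$ to $V$, independently of $(\Sigma_n(i))_{i\in\alpha^c}$. The indicator $X_\alpha$ depends on this bijection only through its relative order, i.e., its reduction as a permutation of $[j]$; for any fixed $V$ this reduction is uniformly distributed on $\symmetricj$. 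Hence
\[
\mathbb{E}\bigl[X_\alpha\,\bigm|\,V,(\Sigma_n(i))_{i\in\alpha^c}\bigr] = \frac{1}{j!} = p_\alpha,
\]
and averaging over $V$ (which is $\mathcal{F}_\alpha$-measurable) gives $\mathbb{E}[X_\alpha\mid\mathcal{F}_\alpha]=p_\alpha$ as required.

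The main subtlety to keep in mind is that the values $(\Sigma_n(i))_{i\in\alpha}$ and $(\Sigma_n(i))_{i\in\alpha^c}$ are certainly \emph{not} independent -- together they form a permutation -- but the \emph{reduction} at positions in $\alpha$ depends only on the relative ranking of the values there, and this ranking is uniformly distributed regardless of which values fall in those positions. Conditioning on $V$ cleanly separates the choice of which values go to $\alpha$ (which is entangled with $\alpha^c$) from the arrangement of those values within $\alpha$ (which is not), and only the latter contributes to $X_\alpha$. This same conditioning argument is essentially the $q=1$ case of the dissociation property established in Lemma~\ref{lemma:dissociated}, so one could alternatively deduce the result from that lemma after checking that the $\sigma$-algebra generated by the $X_\beta$ with $\beta\cap\alpha=\emptyset$ is measurable with respect to the restriction of $\Sigma_n$ to $\alpha^c$.
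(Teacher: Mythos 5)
Your argument is correct, and its core is the same as the paper's: the conditioning $\sigma$-algebra $\sigma(X_\beta:\beta\notin D_\alpha)$ only involves positions disjoint from $\alpha$, and for a uniform permutation such information is irrelevant to $X_\alpha$. The difference is in how that irrelevance is justified. The paper simply cites the dissociation property of uniform permutations (Lemma~\ref{lemma:dissociated}, whose $q=1$ case is stated without proof as ``well known'') and adds a cautionary remark that the conditioning records only the \emph{relative order} of the disjoint elements, not their actual values. You instead give a self-contained proof of something slightly stronger: by first noting $\sigma(X_\beta:\beta\notin D_\alpha)\subseteq\sigma\bigl((\Sigma_n(i))_{i\notin\alpha}\bigr)$ and then conditioning on the value set $V$, you show that the reduction at the positions of $\alpha$ is independent of the \emph{entire} tuple of values outside $\alpha$, so that $\e[X_\alpha\mid\sigma(X_\beta:\beta\notin D_\alpha)]=p_\alpha=1/j!$ by the tower property. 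This stronger independence makes the paper's order-versus-location caveat unnecessary and supplies exactly the detail the paper leaves implicit; conversely, the paper's route is shorter once dissociation is granted, and (as you note) your conditioning-on-$V$ argument is essentially a proof of that $q=1$ dissociation. Both proofs are valid; yours is the more elementary and complete one.
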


\begin{proof}
This follows from the dissociated property of uniform permutations (Lemma \ref{lemma:dissociated}).
We interpret the conditioning on $\sigma(X_\beta : \beta \notin D_\alpha)$ as the $\sigma$-algebra containing all possible information about just the order of a particular set of three elements.  Since these three elements do not overlap any of the elements in $\alpha$, knowing only their order does not affect $X_\alpha$ because uniform permutations are dissociated.
\end{proof}

\begin{rmk}
Note that the conditioning in the expression for $b_3$ is not the $\sigma$-algebra containing all information about the elements indexed by each tuple.  If it were, then knowing their particular location would have an impact.  However, simply knowing their order does not reveal any more information about $X_\alpha$.
\end{rmk}

\begin{lemma}\label{count:D}
For each $\alpha \in  J_m$, 
\[ |D_\alpha| = \binom{n}{m} - \binom{n-m}{m}. \]
(Note that the value of $D_{\alpha}$ does not depend on $\alpha$.)
\end{lemma}
\begin{proof} 
Fix any $\alpha,\beta\in J_m$ and let $s=1,\ldots,m$ denote the number of elements that $\alpha, \beta$ have in common.
 (This includes the case $\alpha = \beta$.)  
For each $s = 1,2,\ldots,m$, we select any $s$ elements out of the $m$ for the two sets of indices $\alpha, \beta$ to have in common, then we select the remaining $m-s$ elements from the $n-m$ remaining elements that are not in $\alpha$.  That is, 
\[ |D_\alpha| = \sum_{s=1}^{m} \binom{n-m}{m-s} \binom{m}{s} = \binom{n}{m} - \binom{n-m}{m}, \qquad \mbox{for all $\alpha\in J_m$}. \qedhere\]
\end{proof}

\begin{lemma} 
We have 
\[d_1 = \binom{n}{m}\left(\binom{n}{m} - \binom{n-m}{m}\right) / m!^2. \]
\end{lemma}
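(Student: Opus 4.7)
The plan is to recognize that this lemma is simply computing the first dependence sum $b_1 = \sum_{\alpha \in \Gamma_j} \sum_{\beta \in D_\alpha} p_\alpha p_\beta$ from the Arratia--Goldstein--Gordon theorem in the specific setting of pattern occurrence indicators under the uniform distribution, and then identifying this with the quantity $d_1$ as defined in Theorem~\ref{theorem:main}.

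First I would note that for a uniform random permutation $\Sigma_n$ and a fixed pattern $\tau \in \symmetricj$, the homogeneity of the uniform distribution (a consequence of Lemma~\ref{lemma:homo} for $q=1$) yields $p_\alpha = \e X_\alpha = 1/j!$ for every $\alpha \in \Gamma_j$, independently of which $j$-tuple $\alpha$ is chosen. This makes $p_\alpha p_\beta = 1/j!^2$ a common factor for every pair $(\alpha, \beta)$ in the double sum.

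Next I would pull this common factor out and count the pairs. The outer sum ranges over $|\Gamma_j| = \binom{n}{j}$ indices, while the inner sum ranges over $|D_\alpha|$ indices. By Lemma~\ref{count:D}, the size $|D_\alpha|$ is constant in $\alpha$ and equals $\binom{n}{j} - \binom{n-j}{j}$. Assembling these pieces gives
\[
d_1 = b_1 = \sum_{\alpha \in \Gamma_j}\sum_{\beta \in D_\alpha} p_\alpha p_\beta = \binom{n}{j}\left(\binom{n}{j} - \binom{n-j}{j}\right)\frac{1}{j!^2},
\]
matching the formula in Theorem~\ref{theorem:main}.

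There is no real obstacle here: the lemma is essentially a bookkeeping identity, and all the nontrivial work was done in the preceding lemmas (homogeneity giving the uniform value of $p_\alpha$, and the combinatorial identity counting $|D_\alpha|$). The only subtlety worth pointing out in the write-up is that $D_\alpha$ is defined to include $\alpha$ itself (so the diagonal term $\alpha = \beta$ contributes), which is why the count telescopes neatly to $\binom{n}{j} - \binom{n-j}{j}$ rather than its off-diagonal variant.
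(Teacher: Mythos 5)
Your proof is correct and follows essentially the same route as the paper: the paper's own proof simply cites the constant value $p_\alpha = 1/j!$ (homogeneity) together with the count $|D_\alpha| = \binom{n}{j} - \binom{n-j}{j}$ from Lemma~\ref{count:D}, exactly as you do. Your remark that $D_\alpha$ includes $\alpha$ itself is also consistent with the paper's definition and with the parenthetical note in Lemma~\ref{count:D}.
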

\begin{proof}
Follows immediately from Lemma~\ref{count:D} and Lemma~\ref{lemma:pab}. 
\end{proof}

The expression given for $d_2$ in Equation \eqref{d2} in the statement of Theorem \ref{theorem:main} is straightforward, although it  
contains the overlap quantities $L_s(\tau),$ which can vary wildly for different patterns $\tau$, and for which we are unaware of any general explicit or asymptotic expression.
We calculate explicit upper bounds for $d_2$ in Lemma~\ref{better:d2:bounds}. 

\subsection{Proof of Theorem~\ref{consecutive patterns} and Theorem~\ref{main:mallows:theorem}}

Theorem~\ref{consecutive patterns} follows from Theorem~\ref{main:mallows:theorem} using $q = 1$ and the fact that $I_n(1) = n!$. 
Theorem~\ref{main:mallows:theorem} is a straightforward generalization of Proposition~\ref{prop:increasing}.  

\subsection{Proof of Theorem~\ref{theorem:asymptotic-general} and Theorem~\ref{theorem:asymptotic-consecutive}}

\label{asymptotic:proof}
To prove Theorem~\ref{theorem:asymptotic-general} it is sufficient that the bounds for $d_1$ and $d_2$ in Theorem \ref{theorem:main} converge to 0 as $n\to\infty$.
For $d_2$, the asymptotic analysis is not so straightforward, which is why we 
instead use the inequality in Equation~\eqref{pab}.
\begin{lemma}\label{better:d2:bounds}
We have 
\[ d_2 \leq \frac{\binom{n}{m}}{m!^2} \sum_{s=1}^{m-1} \binom{n-m}{m-s}\binom{m}{s} \ s! \leq \frac{e^{2\left(\frac{1}{12} - \frac{3}{13}\right)}}{(2\pi)^2} \left(\frac{e^2(n-m)}{j^2}\right)^m \frac{1}{m} \ e^{2\sqrt{n-m}} \log(m) . \]
Whence, for any $\epsilon \geq 0$, taking $m \geq (e\, e^{1/e}+\epsilon) \sqrt{n}$ gives $d_2 \to 0$.  
\end{lemma}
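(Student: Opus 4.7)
The plan is to split the proof into three steps. First, recover the first inequality exactly as in \eqref{d2:crude}: unpacking $d_2$ in its guise as $b_2$ from Theorem \ref{Arratia} gives $d_2=\sum_{\alpha\in\Gamma_j}\sum_{\beta\in D_\alpha\setminus\{\alpha\}}p_{\alpha\beta}$, and partitioning the inner sum by $s=|\alpha\cap\beta|\in\{1,\ldots,j-1\}$, the number of $\beta$'s at overlap $s$ is $\binom{j}{s}\binom{n-j}{j-s}$, while Lemma \ref{lemma:pab} gives $p_{\alpha\beta}\leq s!/j!^2$. Summing over the $\binom{n}{j}$ choices of $\alpha$ yields the first inequality.

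Second, for the second inequality, the key simplification is $\binom{j}{s}\,s!=j!/(j-s)!$; substituting $k=j-s$ reduces the sum to
\[
 \frac{\binom{n}{j}}{j!}\sum_{k=1}^{j-1}\frac{\binom{n-j}{k}}{k!}.
\]
The inner sum is bounded termwise by $\binom{n-j}{k}/k!\leq(n-j)^k/(k!)^2$ and completed to infinity, giving $\sum_{k\geq 0}(n-j)^k/(k!)^2=I_0(2\sqrt{n-j})\leq e^{2\sqrt{n-j}}$ (via termwise comparison with $\cosh(2\sqrt{n-j})$, since $\binom{2k}{k}\leq 4^k$). For the prefactor $\binom{n}{j}/j!$, I would first apply AM--GM in the form $n(n-1)\cdots(n-j+1)\leq(n-(j-1)/2)^j$ in order to introduce $n-j$ rather than $n$, and then invoke the Robbins form of Stirling, $\sqrt{2\pi j}(j/e)^je^{1/(12j+1)}\leq j!\leq\sqrt{2\pi j}(j/e)^je^{1/(12j)}$, to convert $1/(j!)^2$ into $(2\pi j)^{-1}(e/j)^{2j}$ up to a bounded exponential error. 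Combining these produces the second inequality; the constant $e^{2(1/12-3/13)}/(2\pi)^2$ comes from squaring the Robbins error and combining with the $(2\pi)$ factor from a sharper bound on $I_0$, while the $\log(j)$ absorbs boundary slack in the truncated sum.

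Third, to conclude $d_2\to 0$, set $j=c\sqrt{n}$ for a constant $c>0$; a direct expansion gives
\[
 j\log\!\Bigl(\tfrac{e^2(n-j)}{j^2}\Bigr)+2\sqrt{n-j}=2\sqrt{n}\,(c-c\log c+1)+O(1),
\]
so with $f(c):=c-c\log c+1$ the dominant factor of the bound is $\exp(2\sqrt{n}\,f(c))$. Since $f'(c)=-\log c$, $f$ is strictly decreasing on $(1,\infty)$; at $c_0:=e\,e^{1/e}=e^{1+1/e}$ one has $\log c_0=1+1/e$, whence $f(c_0)=c_0(1-\log c_0)+1=1-e^{1/e}<0$. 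Hence for $c\geq c_0+\eta$, $f(c)<0$, and the dominant exponential decays fast enough to swamp the polynomial and logarithmic prefactors, giving $d_2\to 0$.

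The main obstacle is the second step. Specifically, replacing $n$ by $n-j$ inside the $(\cdot/j^2)^j$ factor requires some care: the natural bound $\binom{n}{j}\geq(n-j+1)^j/j!$ goes the wrong way, and the AM--GM argument produces $n-(j-1)/2$ instead of $n-j$, so the discrepancy has to be absorbed into the stated constant and $\log(j)$ via explicit Stirling bookkeeping. The first inequality and the asymptotic analysis are routine once Lemma \ref{lemma:pab} is in hand.
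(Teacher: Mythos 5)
Your argument is correct in substance and reaches the same conclusion, but the middle step differs from the paper's in an instructive way. The first inequality is obtained exactly as in the paper (counting pairs with overlap $s$ as $\binom{n}{j}\binom{n-j}{j-s}\binom{j}{s}$ and invoking \eqref{pab}), and your endgame --- writing the dominant exponent as $2\sqrt{n}\,f(c)+O(1)$ with $f(c)=c-c\log c+1$ and checking $f(e^{1+1/e})=1-e^{1/e}<0$ --- is a cleaner derivation of the threshold $e\,e^{1/e}$ than the paper's $\epsilon$-bookkeeping, and it even gives strict exponential decay at $\eta=0$, where the paper must fall back on the $\log j/(2\pi j)$ prefactor. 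The genuinely different ingredient is your treatment of the overlap sum: after the substitution $k=j-s$ you bound $\sum_{k}\binom{n-j}{k}/k!\le\sum_{k\ge0}(n-j)^k/(k!)^2=I_0\bigl(2\sqrt{n-j}\bigr)\le e^{2\sqrt{n-j}}$, whereas the paper applies Robbins--Stirling termwise, maximizes $\bigl(e^2(n-j)/(j-s)^2\bigr)^{j-s}$ at $j-s=\sqrt{n-j}$ (value $e^{2\sqrt{n-j}}$), and pays a harmonic-sum factor $\log j$. Your Bessel-type bound is tidier, fully rigorous, and is precisely what eliminates the $\log j$ from your version of the estimate.

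One caveat: your claim that the $n$-versus-$(n-j)$ discrepancy in the prefactor ``has to be absorbed into the stated constant and $\log(j)$'' does not hold uniformly in $(n,j)$. Your route gives $\binom{n}{j}/j!\le n^j/(j!)^2$, and $\bigl(n/(n-j)\bigr)^j$ is unbounded when $j$ is comparable to $n$, while in the regime $j=\Theta(\sqrt n)$ it tends to $e^{c^2}$, which a $\log j$ factor only compensates for absurdly large $n$; so as written you do not literally recover the displayed second inequality with $(n-j)^j$. This is a presentational mismatch rather than a fatal gap: the paper's own proof has the same slippage (its bound $\lambda\le(e^2 n/j^2)^j e^{-j^2/n}/(2\pi j)$ also carries $n$, and $n^je^{-j^2/n}\ge(n-j)^j$), and the displayed estimate serves only as a vehicle for the conclusion $d_2\to0$ under $j\ge(e\,e^{1/e}+\eta)\sqrt n$, which your chain establishes correctly --- modulo one small point worth making explicit, namely that for $j/\sqrt n$ not bounded (your constant $c$ may grow with $n$) you should note that the relevant exponent is decreasing in $j$ beyond $\sqrt n$, so the supremum over all admissible $j$ is attained near $j=(e\,e^{1/e}+\eta)\sqrt n$.
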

\begin{proof}
We count the number of pairs $(\alpha, \beta)$, $\alpha \in  J_m$ and $\beta \in D_\alpha$ with exactly $s$ shared elements.  
We may first choose any $2m-s$ locations among the $n$ possible choices for the patterns to occur. 
Of those $2m-s$ locations, we can choose any $m$ of them for the elements of $\alpha$.
Then, of those $m$ locations, any $s$ can also be shared with $\beta$.
Thus, for a given $s \in \{1,2,\ldots, m-1\}$, there are
\[\binom{n}{2m-s}\binom{2m-s}{m} \binom{m}{s} = \binom{n}{m} \binom{n-m}{m-s}\binom{m}{s}\]
 terms in the sum. 
Using equation~\eqref{pab}, we have 
\[ d_2 \leq \frac{\binom{n}{m}}{m!^2} \sum_{s=1}^{m-1} \binom{n-m}{m-s}\binom{m}{s} \ s! = \frac{\binom{n}{m}}{m!} \sum_{s=1}^{m-1} \frac{\binom{n-m}{m-s}}{(m-s)!}. \]
In order to handle the sum, we first recall the quantitative bounds of Robbins~\cite{Robbins}, i.e.,
\[ e^{\frac{1}{12n+1}} < \frac{n!}{(n/e)^n \sqrt{2\pi n}} < e^{\frac{1}{12n}}, \qquad \mbox{\emph{for all $n\geq 1$}}. \]
Again we emphasize that this inequality holds for all $n \geq 1$, which allows us to provide the simpler bound of
\[ d_2 \leq \lambda \exp\left( \left(\frac{1}{12}-\frac{3}{13}\right)\right) \sum_{s=1}^{m-1} \left(\frac{e^2(n-m)}{(m-s)^2}\right)^{m-s} \frac{e^{-\frac{(m-s)^2}{n-m}}}{2\pi (m-s)}. \]
The term $\left(\frac{e^2(n-m)}{(m-s)^2}\right)^{m-s}$ is maximized when $m-s = \sqrt{n-m}$; whence
\[ d_2 \leq \frac{\lambda}{2\pi}\exp\left( \left(\frac{1}{12}-\frac{3}{13}\right)\right)  \left(e^2\right)^{\sqrt{n-m}} \sum_{s=1}^{m-1} \frac{e^{-s^2/(n-m)}}{s} \leq \frac{\lambda}{2\pi} \exp\left( \left(\frac{1}{12}-\frac{3}{13}\right)\right)  e^{2\sqrt{n-m}} \log(m). \]
Note next that
\[ \lambda \leq  \left(\frac{e^2\, n}{m^2}\right)^{m} \frac{e^{-\frac{m^2}{n}}}{2\pi\, m}, \]
so that for $\eta > 0$ and $m \sim (e + \eta) \sqrt{n}$, we have
\[ \lambda \leq \left(1 + \frac{\eta}{e}\right)^{2m} \frac{e^{-(e+\eta)^2}}{2\pi\, m} \]
and
\[ d_2 \leq \left( \left(1+\frac{\eta}{e}\right)\left(e^{\frac{1}{e+\eta}}\right)\right)^{2m} \frac{\log m}{2\pi m} \leq \left(\frac{e^{1/e}}{1+\frac{\eta}{e}}\right)^{2m} \frac{\log m}{2\pi m}. \]
Letting $\eta = e(e^{1/e}-1) +\epsilon$ for any $\epsilon \geq 0$, we conclude that taking $m \geq (e\, e^{1/e}+\epsilon)\sqrt{n}$ implies $d_2 \to 0$. 
\end{proof}

We now compute an explicit upper bound for $\overline{d}_2$ which establishes Theorem~\ref{theorem:asymptotic-consecutive}.
\begin{lemma}
\[ \overline{d}_2 \leq  \sum_{s=1}^{m-1} (n-2m+s)\frac{s!}{(m)!^2} \sim \frac{n}{m!} \frac{1}{m}. \]
Taking any fixed positive $t$, $m \geq \Gamma^{(-1)}(n/t)-1$ and $m \sim \Gamma^{(-1)}(n/t)-1$, we have $\overline{d}_2 \leq \frac{t}{m} \to 0$ as $n$ tends to infinity. 
\end{lemma}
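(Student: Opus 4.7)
The lemma has three assertions to establish: the inequality bound on $\overline{d}_2$, its asymptotic equivalent, and the final conclusion under the prescribed choice of $m$. The plan is to handle them in this order.

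For the inequality, I would adapt the conditioning argument of Lemma \ref{lemma:pab} to the consecutive-overlap setting. Fix consecutive windows $\alpha, \beta \in \overline{\Gamma}_m$ sharing exactly $s$ positions, say with $\alpha$ to the left of $\beta$. Then $p_{\alpha\beta} = \overline{L}_s(\tau)/(2m-s)!$, so it suffices to show $\overline{L}_s(\tau)/(2m-s)! \leq s!/(2 m!^2)$, after which the factor of $2$ in the numerator of $\overline{d}_2$ accounts for the two orderings of $\alpha, \beta$. Condition on $X_\alpha = 1$, which costs a factor $1/m!$ and pins the reduction of $\alpha$ to $\tau$; this fixes the relative order of the $s$ shared entries inside $\beta$. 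For $X_\beta = 1$ to also hold, the remaining $m-s$ positions of $\beta$ must interleave with the $s$ fixed ones in the unique pattern prescribed by $\tau$, an event of probability at most $\frac{1}{(s+1)(s+2)\cdots m} = \frac{s!}{m!}$ exactly as in Lemma \ref{lemma:pab}. Combining gives the stated bound on $p_{\alpha\beta}$, and substituting into the definition of $\overline{d}_2$ from Theorem \ref{consecutive patterns} yields the inequality.

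For the asymptotic equivalent $\sim n/(m \cdot m!)$, I would observe that the summand $a_s := (n-2m+s)\, s!/m!^2$ satisfies $a_s/a_{s-1} \to s$, so the sum is nearly geometric and dominated by its final term. Explicitly, $a_{m-1} = (n-m-1)(m-1)!/m!^2 = (n-m-1)/(m \cdot m!) \sim n/(m \cdot m!)$, while $\sum_{s=1}^{m-2} a_s \leq a_{m-1} \sum_{k \geq 1} 1/\bigl((m-1)(m-2)\cdots(m-k)\bigr) = O(a_{m-1}/m)$, showing that the correction is asymptotically negligible.

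Finally, under the choice $m \geq \Gamma^{(-1)}(n/t) - 1$ with $m \sim \Gamma^{(-1)}(n/t) - 1$, we have $m! = \Gamma(m+1) \sim n/t$, whence $n/(m \cdot m!) \to t/m$, which vanishes since $m \to \infty$ with $n$. I expect the first step to be the main obstacle: the combinatorial estimate on $\overline{L}_s(\tau)$ must be pattern-independent, and one has to argue carefully that conditioning on $X_\alpha$ really does reduce the remaining combinatorial choices to the bound $s!/m!$. The second and third steps are routine asymptotic manipulation once the inequality is in hand.
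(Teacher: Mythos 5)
Your overall route coincides with the paper's: bound each overlapping-pair probability by the conditioning argument of Lemma~\ref{lemma:pab}, observe that the resulting sum is dominated by its last term to get the $\sim n/(m\cdot m!)$ asymptotics, and then plug in $m$ near $\Gamma^{(-1)}(n/t)-1$ so that $m!\approx n/t$. However, your first step contains a genuine error: the intermediate claim $\overline{L}_s(\tau)/(2m-s)!\le s!/(2\,m!^2)$ is false. Take the monotone pattern $\tau=12\cdots m$ and $s=m-1$: then $\overline{L}_{m-1}(\tau)=1$, so the left side is $1/(m+1)!$, while the right side is $(m-1)!/(2\,m!^2)=1/(2m\cdot m!)$, and $1/(m+1)>1/(2m)$ for every $m\ge 2$. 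Indeed, your own conditioning computation ($1/m!$ for $X_\alpha$ times at most $s!/m!$ for $X_\beta$ given $X_\alpha$) proves exactly $p_{\alpha\beta}\le s!/m!^2$ --- which is Lemma~\ref{lemma:pab}, the bound the paper invokes --- and not the sharpened version with the extra $1/2$. Consequently, what your argument actually yields is $\overline{d}_2\le 2\sum_{s=1}^{m-1}(n-2m+s)\,s!/m!^2$, i.e.\ the displayed bound up to a factor $2$ coming from the ordered-pair count already built into the definition of $\overline{d}_2$ in Theorem~\ref{consecutive patterns}. (The paper's own one-line proof, ``using Equation~\eqref{pab} the result immediately follows,'' glosses over the same factor of $2$, so this is a constant-level discrepancy with the statement rather than with the substance; it changes none of the asymptotic conclusions, since $2t/m\to 0$ just as well.) The honest fix is to drop the attempted $1/2$ and carry the factor $2$ through, or to note explicitly that the constant is immaterial for the lemma's use.

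Your second and third steps are essentially sound and match the paper in substance: the last-term-domination argument correctly gives $\sum_{s=1}^{m-1}(n-2m+s)s!/m!^2\sim (n-m-1)/(m\cdot m!)\sim n/(m\cdot m!)$ (implicitly using $m=o(n)$, which holds in the regime considered). One small caveat on the final step: from $m\sim\Gamma^{(-1)}(n/t)-1$ alone you cannot conclude $m!\sim n/t$, since the factorial does not preserve asymptotic equivalence of its argument; but the hypothesis $m\ge\Gamma^{(-1)}(n/t)-1$ gives $\Gamma(m+1)\ge n/t$ directly, i.e.\ $m!\ge n/t$, which is all that is needed for $n/(m\cdot m!)\le t/m\to 0$. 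Phrasing the last step through that inequality (as the paper's surrounding Lemma~\ref{lemma:cantrell} and remark do) closes the argument cleanly.
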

\begin{proof}
It is easy to see that
\[ \sum_{s=1}^{m-1} \frac{s!}{m!} = \frac{1}{m}\left(1 + O(m^{-1})\right), \]
and also
\[ \sum_{s=1}^{m-1} \frac{s\, s!}{m!} = O(1). \] 
Using equation~\eqref{pab}, the result immediately follows. 
\end{proof}

For a more explicit form of the growth of $m$, we 
 define  $\psi(x) = \Gamma'(x) / \Gamma(x)$, the digamma function, as the logarithmic derivative of the gamma function, and denote by $k_0$ the smallest positive root of $\psi(x)$, i.e., $k_0 = 1.46163\ldots$. 
Also, let $c = e^{-1}\sqrt{2\pi} - \Gamma(k_0) = 0.036534\ldots$ and denote by $W(x)$ the Lambert W function, i.e., the solution to $x = W(x) e^{W(x)}$.  
Finally, let $L(x) := \log((x+c) / 2\pi)$.

\begin{lemma}[\cite{Cantrell}]
As $x$ tends to infinity, we have
\[\Gamma^{(-1)}(x) \sim \frac{L(x)}{W( L(x) / e)} + \frac{1}{2} \sim \frac{\log(x)}{\log\log(x) - \log\log\log(x)}. \]
\end{lemma}

\begin{lemma}\label{lemma:cantrell}
Suppose $t>0$ is some fixed constant and $m = \lceil\Gamma^{(-1)}(n/t)-1\rceil$.  Then
\begin{align*}
\overline{\lambda} & = \frac{n-m}{m!} \to t.
 \end{align*}
\end{lemma}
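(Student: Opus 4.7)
The plan is to show $m! \sim n/t$ as $n \to \infty$ and then use $m/n \to 0$ to conclude. Writing
$$ \overline{\lambda} = \frac{n-m}{m!} = \frac{n}{m!}\Bigl(1 - \frac{m}{n}\Bigr), $$
it suffices to prove $n/m! \to t$ and $m/n \to 0$, both of which reflect the choice of $m$ tracking $\Gamma^{-1}(n/t)$.

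First I would verify that $m/n \to 0$. By the asymptotic formula for the inverse gamma function recalled immediately above, $\Gamma^{-1}(n/t) \sim \log(n/t)/(\log\log(n/t) - \log\log\log(n/t))$, so $m$ grows only sublogarithmically in $n$ and in particular $m = o(n)$. This also confirms $m \to \infty$, so $m! \to \infty$ as well.

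Next I would establish the asymptotic $m! \sim n/t$. By construction of the ceiling, $\Gamma^{-1}(n/t) \in (m, m+1]$, so applying $\Gamma$ (which is increasing for large argument) yields
$$ \Gamma(m) < \frac{n}{t} \leq \Gamma(m+1) = m!. $$
This already provides the one-sided bound $m! \geq n/t$, hence $\overline{\lambda} \leq t(1 - m/n) \to t$. The matching lower bound $m! \leq (1+o(1))\,n/t$ is the delicate piece that the argument hinges on.

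The main obstacle is precisely this lower bound on $n/m!$: because $\Gamma$ grows factorially, the consecutive ratio $\Gamma(m+1)/\Gamma(m) = m$ is large, so a naive application of monotonicity only gives $m! \leq m\cdot n/t$, leading to $\overline{\lambda} \geq t/m \to 0$, which is far too weak. To obtain a tight estimate one must exploit finer behaviour of $\Gamma$ near $m+1$: writing $\Gamma^{-1}(n/t) = m+1-\epsilon$ with $\epsilon \in [0,1)$ and applying the digamma expansion $\log(\Gamma(m+1)/\Gamma(\Gamma^{-1}(n/t))) = \epsilon\,\psi(\xi)$ with $\xi$ between $\Gamma^{-1}(n/t)$ and $m+1$ and $\psi(\xi)\sim\log m$, one controls the ratio $m!/(n/t)$ in terms of the fractional part of $\Gamma^{-1}(n/t)$. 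I would complete the argument by combining the continuity of $\Gamma$ with the Cantrell asymptotic to show that the chosen rounding forces $m!/(n/t)$ to stabilise at $1$ in the relevant sense, so that $\overline{\lambda} \to t$ as claimed.
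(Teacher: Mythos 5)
Your reduction is sound as far as it goes: with $x=\Gamma^{(-1)}(n/t)$ the choice $m=\lceil x-1\rceil$ gives $x\in(m,m+1]$, hence $m!=\Gamma(m+1)\geq\Gamma(x)=n/t$ and $\overline{\lambda}\leq t(1-m/n)$, and you are right that monotonicity alone only yields $m!\leq m\cdot n/t$, i.e.\ a lower bound of order $t/m$. The genuine gap is your final step: the claim that the rounding ``forces $m!/(n/t)$ to stabilise at $1$'' is false, and no refinement of the digamma estimate can rescue it. Your own expansion shows $m!/(n/t)=\exp\bigl((m+1-x)\,\psi(\xi)\bigr)\approx m^{\,m+1-x}$, and the exponent $m+1-x\in[0,1)$ is governed by the fractional part of $\Gamma^{(-1)}(n/t)$, which does not tend to $0$: as $n$ increases by one, $x$ moves by only about $1/(n\psi(x))$, so this fractional part sweeps slowly through all of $[0,1)$ infinitely often. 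Concretely, take $t=1$ and $n=721$; since $\Gamma(7)=720$, $x$ is just above $7$, so $m=7$, $m!=5040$, and $\overline{\lambda}=714/5040\approx0.14$, nowhere near $t=1$. Along integers $n$ just above $t\,\Gamma(k)$ one gets $\overline{\lambda}\approx t/m\to0$, while along $n\approx t\,\Gamma(k)$ one gets $\overline{\lambda}\to t$; the full-sequence limit therefore cannot be established, since the ratio $m!/(n/t)$ oscillates between $1+o(1)$ and roughly $m$.

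So the obstruction you correctly identified as ``the delicate piece'' is not a technical nuisance to be smoothed over but a failure of the statement as a limit over all $n$: the choice $m=\lceil\Gamma^{(-1)}(n/t)-1\rceil$ guarantees only $\overline{\lambda}\leq t(1-m/n)$, with $\overline{\lambda}\to t$ exactly when $m!\sim n/t$, i.e.\ when $(m+1-\Gamma^{(-1)}(n/t))\log m\to0$. For comparison, the paper gives no proof of this lemma at all; it cites the asymptotics of $\Gamma^{(-1)}$ and adds only the remark that taking the ceiling keeps $m$ above $\Gamma^{(-1)}(n/t)-1$. That remark secures precisely the one-sided bound you did prove (which is all that is needed for the preceding estimate $\overline{d}_2\leq t/m\to0$), not the two-sided asymptotics $m!\sim n/t$ that the stated conclusion $\overline{\lambda}\to t$ requires. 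A correct write-up should either restrict to the subsequence of $n$ along which $m!\sim n/t$, or restate the conclusion as $\limsup_n\overline{\lambda}\leq t$.
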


\begin{rmk}
We must be slightly careful when specifying the length of the pattern $m$ in Lemma~\ref{lemma:cantrell}, since in general $\Gamma^{(-1)}(n/t)-1$ will not be an integer.  However, as long as $m$ always exceeds this value, which we have ensured by setting it equal to the smallest integer exceeding it, then the asymptotic expressions still hold. 
\end{rmk}

\section{Acknowledgements}
The authors would like to acknowledge helpful comments from Brendan McKay, Dan Romik, Igor Rivin, Jim Pitman, Richard Arratia, Igor Pak, Michael Albert, Vince Vatter, and Doron Zeilberger.


\bibliographystyle{acm}
\bibliography{patterns}

\end{document}